\def\N{{\mathbb N}}
\def\Z{{\mathbb Z}}
\def\R{{\mathbb R}}
\def\C{{\mathbb C}}
\def\E{{\mathbb E}}
\theoremstyle{plain}
\newtheorem{theorem}{Theorem}[section]
\theoremstyle{remark}
\newtheorem{remark}[theorem]{Remark}
\newtheorem{example}[theorem]{Example}
\theoremstyle{plain}
\newtheorem{lemma}[theorem]{Lemma}
\newtheorem{proposition}[theorem]{Proposition}
\newtheorem{definition}[theorem]{Definition}
\newtheorem{problem}{Problem}
\numberwithin{equation}{section}
\DeclareMathOperator{\pv}{p.v.}
\DeclareMathOperator{\Ran}{Ran}
\DeclarePairedDelimiter\abs{\lvert}{\rvert}
\DeclareMathOperator {\conv}{conv}
\DeclarePairedDelimiter\cbrace\{\}
\DeclarePairedDelimiter\ha()
\DeclarePairedDelimiter{\nrm}\lVert\rVert
\newcommand{\norm}[1]{\lVert #1 \rVert}
\newcommand{\nrms}[1]{\Bigl\|#1\Bigr\|}
\newcommand{\nrmb}[1]{\bigl\|#1\bigr\|}
\newcommand{\hab}[1]{\bigl(#1\bigr)}
\newcommand{\has}[1]{\Bigl(#1\Bigr)}
\newcommand{\cbraceb}[1]{\bigl\{#1\bigr\}}
\newcommand{\cbraces}[1]{\Bigl\{#1\Bigr\}}
\newcommand {\ud}{\,\mathrm{d}}
\newcommand{\dd}{\hspace{2pt}\mathrm{d}}
\DeclareMathOperator*{\esssup}{ess\,sup}
\newcommand{\one}{{{\bf 1}}}
\def\calL{{\mathcal L}}
\newcommand {\Schw}{\mathcal{S}}
\newcommand{\supp}{\mathrm{supp}\,}
\newcommand{\wt}{\widetilde}
\DeclareMathOperator {\UMD}{UMD}
\DeclareMathOperator {\LPR}{LPR}
 \numberwithin{equation}{section}
\newcommand{\F}{{\mathscr F}}
\newcommand{\wh}{\widehat}
\DeclareFontFamily{U}{mathx}{\hyphenchar\font45}
\DeclareFontShape{U}{mathx}{m}{n}{<5> <6> <7> <8> <9> <10> <10.95> <12> <14.4> <17.28> <20.74> <24.88> mathx10}{}
\DeclareSymbolFont{mathx}{U}{mathx}{m}{n}
\DeclareMathAccent{\widecheck}{0}{mathx}{"71}
\begin{document}
	\title{Operator-valued Fourier multipliers of bounded $s$-variation}

\author[C. Deng]{Chenxi Deng}
\address[C. Deng]{Delft Institute of Applied Mathematics\\
	Delft University of Technology \\ P.O. Box 5031\\ 2600 GA Delft\\The
	Netherlands}
	\email{c.deng@tudelft.nl}

\author[E. Lorist]{Emiel Lorist}
\address[E. Lorist]{Delft Institute of Applied Mathematics\\
	Delft University of Technology \\ P.O. Box 5031\\ 2600 GA Delft\\The
	Netherlands}
\email{e.lorist@tudelft.nl}

\author[M.C. Veraar]{Mark Veraar}
\address[M.C. Veraar]{Delft Institute of Applied Mathematics\\
	Delft University of Technology \\ P.O. Box 5031\\ 2600 GA Delft\\The
	Netherlands}
\email{m.c.veraar@tudelft.nl}

\thanks{
The second author is supported by the Dutch Research Council (NWO) on the project ``The sparse revolution for stochastic partial differential equations'' with project number \href{https://doi.org/10.61686/ZGRMR99948}{VI.Veni.242.057}. The third author is supported by the VICI subsidy VI.C.212.027 of the Netherlands Organisation for Scientific Research (NWO)}

\begin{abstract}
	In this paper, we establish an operator-valued Fourier multiplier theorem in weighted Lebesgue spaces, Besov and Triebel--Lizorkin  spaces, assuming the multiplier has $\mathcal{R}$-bounded range and satisfies an $\ell^r$-summability condition on its bounded $s$-variation seminorms over dyadic intervals. The exponents $r$ and $s$ reflect the relationship between the geometric properties of the underlying Banach spaces (type and cotype) and the boundedness of Fourier multiplier operators. As our main tool we prove a weighted vector-valued variational Carleson inequality and deduce an estimate of Littlewood--Paley--Rubio de Francia type.
\end{abstract}
	
\keywords{UMD space, Fourier multipliers, $s$-variation, weights, Littlewood--Paley--Rubio de Francia estimate, variational Carleson operator, $\mathcal{R}$-boundedness}
\subjclass[2020]{Primary: 42A45, Secondary: 42B25, 42B35, 46B09, 46B20, 46E35, 46E40}

%42A45 Multipliers in one variable harmonic analysis
%42B25 Maximal functions, Littlewood-Paley theory
%46B09 Probabilistic methods in Banach space theory
%46B20 Geometry and structure of normed linear spaces
%46E35 Sobolev spaces and other spaces of "smooth'' functions, embedding theorems, trace theorems
%46E40 Spaces of vector- and operator-valued functions

\maketitle

\section{Introduction}
Fourier multiplier operators are a central tool in harmonic analysis, with far-reaching applications to partial differential equations, function space theory, numerical analysis and analytic number theory. In this paper we are concerned with an operator-valued version of this theory. Before turning to this setting, we briefly recall some classical results from the scalar-valued case.

In the scalar-valued case, a Fourier multiplier operator $T_m$ for $m \colon \R\to\C$ is given by
$$T_mf: =\mathcal{F}^{-1}(m\cdot \mathcal{F}(f)), \quad f \in \mathcal{S}(\R),$$ where $\mathcal{F}$ denotes the Fourier transform. Sufficient conditions
for the boundedness of a Fourier multiplier operator $T_m$ on $L^p(\R)$ are provided by the classical Fourier multiplier theorems due to Marcinkiewicz \cite{Mar39}, Mihlin \cite{Mih56, Mih57} and H\"ormander \cite{Hor60}.
For the current paper the formulation of Marcinkiewicz' multiplier theorem is the most natural starting point. Here and below $\Delta$ is the dyadic interval partition of $\R$.
\begin{theorem}[Marcinkiewicz \cite{Mar39}]\label{5.8}
	Let $p \in (1,\infty)$ and let $m\colon \R \to \C$ such that for $J\in \Delta $, $m|_J\in V^1(J)$ and
	\begin{align*}
		\norm{m}_{\ell^\infty(V^1(\Delta))}&:=  \sup_{J \in \Delta} \norm{m|_J}_{V^1(J)}  <\infty,
	\end{align*}
	where $V^1(J)$ is the space of functions of bounded variation. Then $T_m$ is bounded on $L^p(\R)$.
\end{theorem}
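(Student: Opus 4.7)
The plan is to combine the Littlewood--Paley square function equivalence on $L^p(\R)$ with an Abel-type summation on each dyadic block. For $p\in(1,\infty)$ one has $\|f\|_{L^p}\simeq \|(\sum_{J\in\Delta}|P_Jf|^2)^{1/2}\|_{L^p}$, where $P_J$ denotes the Fourier projection onto $J$. Since $T_m$ commutes with $P_J$ and $T_mP_J=T_{m|_J}P_J$, the theorem reduces to showing
\[
\Bigl\|\Bigl(\sum_{J\in\Delta}|T_{m|_J}P_Jf|^2\Bigr)^{1/2}\Bigr\|_{L^p}\lesssim \|m\|_{\ell^\infty(V^1(\Delta))}\,\|f\|_{L^p}.
\]

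To exploit the bounded variation of $m|_J$, fix $J=(a_J,b_J)$ and a function $g$ with Fourier support in $J$. Writing $m(\xi)=m(a_J)+\int_{a_J}^\xi dm(t)$ and applying Fubini produces the Abel-type identity
\[
T_{m|_J}g(x)=m(a_J)\,g(x)+\int_{a_J}^{b_J}P_{(t,b_J)}g(x)\,dm(t),
\]
where $P_{(a,b)}$ is the Fourier projection onto $(a,b)$, uniformly bounded on $L^p(\R)$ as a difference of modulated Hilbert transforms. This immediately yields the pointwise estimate
\[
|T_{m|_J}g(x)|\leq \|m|_J\|_{V^1(J)}\bigl(|g(x)|+M^*_Jg(x)\bigr),\qquad M^*_Jg(x):=\sup_{t\in J}|P_{(t,b_J)}g(x)|.
\]

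Inserting $g=P_Jf$ into the square function reduces the theorem to the vector-valued maximal estimate $\|(\sum_{J\in\Delta}|M^*_JP_Jf|^2)^{1/2}\|_{L^p}\lesssim \|f\|_{L^p}$, which is the main obstacle. For $p\geq 2$ one can settle it by linearising the suprema (choosing measurable $t_J(x)\in J$), noting that the resulting intervals $\{(t_J(x),b_J)\}_{J\in\Delta}$ form a pairwise disjoint family (each sits inside its own dyadic block), and invoking Rubio de Francia's square function inequality for disjoint intervals; the range $p\in(1,2)$ then follows by duality. In the operator-valued, weighted, higher-variation framework of the present paper, this role will be played by the weighted vector-valued variational Carleson inequality advertised in the abstract, which bypasses the linearisation and disjointness requirements.
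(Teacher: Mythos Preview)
The paper does not prove this theorem; it is quoted as a classical result of Marcinkiewicz and serves only as motivation for the later operator-valued theory. So there is no ``paper's proof'' to compare against, but your proposal still deserves scrutiny on its own terms.

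There is a genuine gap in your step for $p\ge 2$. After linearising the suprema by choosing measurable $t_J(x)\in J$, the family $\{(t_J(x),b_J)\}_{J\in\Delta}$ depends on $x$. Rubio de Francia's inequality applies to a \emph{fixed} disjoint family of intervals; it yields $\sup_{(t_J)}\bigl\|\bigl(\sum_J|S_{(t_J,b_J)}f|^2\bigr)^{1/2}\bigr\|_{L^p}\lesssim\|f\|_{L^p}$ with the supremum \emph{outside} the norm, whereas your linearisation requires it \emph{inside}. The resulting object is (up to the dyadic constraint on the right endpoints) the $q=2$ variational Carleson operator, and that endpoint is known to be unbounded; in the framework of the present paper this is precisely why Theorem~\ref{thm:varcarleson} requires $q>2$. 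Your operator is, of course, more constrained and is in fact bounded, but the route to it is via the $\ell^2$-valued Carleson--Hunt theorem, not Rubio de Francia.

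The classical argument avoids all of this by never passing to the pointwise supremum. One keeps the integral $\int_{a_J}^{b_J}S_{(t,b_J)}P_Jf\,dm(t)$, reparametrises each $dm|_J$ by a common variable $s\in[0,1]$, and uses Minkowski's integral inequality to pull the $s$-integral outside both the $\ell^2$-in-$J$ and the $L^p$-in-$x$ norms. For each fixed $s$ the required bound
\[
\Bigl\|\Bigl(\sum_{J}\bigl|S_{(\tau_J(s),\,b_J)}g_J\bigr|^2\Bigr)^{1/2}\Bigr\|_{L^p}\lesssim
\Bigl\|\Bigl(\sum_{J}|g_J|^2\Bigr)^{1/2}\Bigr\|_{L^p}
\]
follows from the $\ell^2$-valued boundedness of the Hilbert transform alone: write $S_{(a,b)}$ as a combination of modulated Hilbert transforms and note that modulations act as pointwise isometries on $\ell^2$. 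This handles every $p\in(1,\infty)$ directly, with no appeal to Rubio de Francia, Carleson, or duality.
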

In \cite{coifman}, Coifman, Rubio de Francia and Semmes relaxed the assumption of bounded variation in Theorem \ref{5.8} to bounded $s$-variation  for some $s>1$ (see Section \ref{sec:RVC} below). Bounded $s$-variation is weaker for larger $s$ and is implied by $\frac1s$-H\"older smoothness. Coifman et al.\ proved their result for the boundedness of $T_m$ on $L^p(\R)$ for $m \in \ell^\infty(V^s(\Delta))$ in three steps:
\begin{enumerate}[(i)]
\item First, they considered the case $p \in [2,\infty)$ and $s\in [1, 2)$, using the extension of the Littlewood--Paley inequality of Rubio de Francia to arbitrary intervals of \cite{Rubio}:
\begin{align}\label{eq:Rubioscalar}
\Big\| \Big(\sum_{I\in \mathcal{I}} |S_I f|^2 \Big)^{1/2} \Big\|_{L^p(\R)} \leq C_p \|f\|_{L^p(\R)},
\end{align}
where $p\in [2, \infty)$, $\mathcal{I}$ is a family of disjoint intervals in $\R$ and $S_I = T_{m}$ for $m:=\one_I$.
\item Secondly, the case $p \in (1,2)$ and $s\in [1, 2)$ follows by duality.
\item Finally, note that the map $(m,f) \mapsto T_mf$ is bounded from  $ L^\infty(\R) \times L^2(\R) $ to $L^2(\R)$ by the Plancherel theorem. Hence, using bilinear interpolation they concluded that $T_m$ is bounded on $L^p(\R)$ if $m \in \ell^\infty(V^s(\Delta))$ with $\frac{1}{s} > \abs{\frac{1}{p}-\frac{1}{2}}$ for $p,s\in (1, \infty)$.
\end{enumerate}
Related multiplier results have been studied in  \cite{BG98, Krol, Ku80}.

\subsection{Fourier multipliers in the vector-valued setting}\label{ss:introFouriervector}
Now let $X$ and $Y$ be Banach spaces.
An operator-valued Fourier multiplier operator $T_m$ is of the form
\begin{equation*}
	T_{m}f =\mathcal{F}^{-1}(m\cdot\mathcal{F}(f)),\quad f \in \mathcal{S}(\R;X),
\end{equation*} where $m \colon \R \to \mathcal{L}(X,Y)$, the space $\mathcal{S}(\R;X)$ denotes the $X$-valued Schwartz functions and $\mathcal{F}$ denotes the Fourier transform.

The classical multiplier theorems due to Marcinkiewicz and Mihlin were extended to $\UMD$ spaces in two stages.  The case $X = Y$ with scalar-valued multipliers $m$ was first addressed in the 1980s by McConnell \cite{McC84} and Bourgain \cite{Bourgain714}, and extended by Zimmermann \cite{Zim} (see also \cite{Tuomas}).
Subsequently, in the early 2000s, the more general setting with $X \neq Y$ and operator-valued multipliers $m$ was developed and was motivated by applications to evolution equations such as maximal $L^p$-regularity and stability of semigroups (see \cite{WeisStability}).
An operator-valued analogue of the Mihlin multiplier theorem was proven by Weis \cite{Weis01} and applied to maximal $L^p$-regularity, and shortly after an analogue of the Marcinkiewicz multiplier theorem by \v{S}trkalj and Weis \cite{SW07}.  These theorems require a so-called $\mathcal{R}$-boundedness condition (see \cite[Chapter 8]{HNVW2}) on the range of $m$,  a strengthening of uniform boundedness that was shown to be necessary by Cl\'ement and Pr\"uss \cite{Clement}. The reader is referred to the Notes of Chapter 8 in \cite{HNVW2} for further references.

This naturally leads to the question whether the aforementioned result of Coifman et al. \cite{coifman} admits an extension to the operator-valued setting. This programme was initiated by Hyt\"onen and Potapov \cite{hp}, where the three main steps mentioned above are followed.
\begin{enumerate}[(i)]
  \item The first step in the operator-valued setting builds on two hypotheses: the range of $m$ is $\mathcal{R}$-bounded and the availability of an $X$-valued version of the Littlewood--Paley--Rubio de Francia estimate \eqref{eq:Rubioscalar}, introduced by Berkson, Gillespie and Torrea \cite{BGT03} and called the $\LPR_p$-property of $X$. This estimate was further studied in \cite{ALV,GT04,HTY,PSX12}.
       At present, the $\LPR_p$-property is only known to hold for Banach \emph{function} spaces satisfying additional geometric requirements, such as the requirement that the $2$-concavification $X^2$ is UMD (see \cite{ALV, PSX12}). It is an open problem whether, e.g., the Schatten class $S^t$ with $t\in [2,\infty)$ has the $\LPR_p$-property.
  \item In the second step one can still employ a duality argument.
  \item In the third step, Hyt\"onen and Potapov \cite{hp} additionally require that $X$ is a complex interpolation space between a Banach space $X_0$ with the $\LPR_p$-property and a Hilbert space $H$. Furthermore, the $\mathcal{R}$-boundedness condition has to be replaced by a more intricate condition: the existence of an $\mathcal{R}$-bounded family $\mathcal{T}\subseteq \calL(X_0\cap H)$ such that $\Ran(m)\subseteq\mathrm{span}\ha{\mathcal{T}}$ and $m$ has bounded $s$-variation in the Minkowski functional with respect to $\mathcal{T}$. This requirement can be quite delicate to check.
\end{enumerate}
A further generalization of this approach for Banach function spaces  was obtained in \cite{alv2}.

\subsection{Special case of our main result}
The goal of this paper is to prove a weighted operator-valued Fourier multiplier theorem for symbols $m$ of bounded $s$-variation with $s>1$, where $s$ is related to the geometry of the underlying Banach spaces. We aim for assumptions on the underlying Banach spaces that can be verified beyond the class of Banach function spaces and, at the same time, avoid the intricate operator-theoretic conditions arising from the interpolation argument sketched above.
 To do so, we will introduce a different type of Rubio de Francia property.

A simplified version of our main result reads as follows.
A space $X$ is called $\theta$-intermediate $\UMD$ if it can be written as $X = [X_0, H]_{\theta}$ for some $\theta\in (0,1]$. We will use the class $A_p$ of Muckenhoupt weights for $p\in (1, \infty)$.

\begin{theorem}\label{thm:mainintro}
Let $\theta\in (0,1]$, $t\in (1, 2]$, $q\in [2, \infty)$ and set $\frac1r := \frac1t-\frac1q$. Let $X$ be a $\theta$-intermediate $\UMD$ Banach space with cotype $q$ and $Y$ be a $\UMD$ Banach space with type $t$. Let $s\in[1,\frac{2}{2-\theta})$ and suppose that $m \colon \R \to \mathcal{L}(X,Y)$ has $\mathcal{R}$-bounded range and
	$$
	\|m\|_{\ell^{r}(\dot{V}^{s}(\Delta;\mathcal{L}(X,Y)))} := \has{\sum_{J \in \Delta} [m|_J]_{V^s(J;\mathcal{L}(X,Y))}^r}^{\frac1r}
	<\infty.
	$$	
Then $T_m$ is bounded from $L^p(\R,w;X)$ to $L^p(\R,w;Y)$ for all $p\in (s,\infty)$ and $w\in A_{p/s}$.

Moreover, if, additionally, $Y$ is a $\theta$-intermediate $\UMD$ Banach space, then  $T_m$ is bounded from $L^p(\R;X)$ to $L^p(\R;Y)$
 for all $p\in (1, \infty)$.
\end{theorem}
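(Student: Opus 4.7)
The plan is to adapt the classical three-step strategy of Coifman--Rubio de Francia--Semmes to the operator-valued, intermediate-UMD, weighted setting, using as the replacement for the scalar LPR inequality \eqref{eq:Rubioscalar} the weighted vector-valued LPR-type estimate that this paper derives from the variational Carleson inequality advertised in the abstract.

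The first step is a Littlewood--Paley-type reduction. Writing $m = \sum_{J \in \Delta} m\cdot \one_J$, the operator decomposes as $T_m f = \sum_{J} T_{m\one_J} f$, with each summand frequency-supported in $J$. Since $Y$ is UMD, this sum realizes a Littlewood--Paley-type expansion, and the problem reduces to controlling an $\ell^{r}$- or Rademacher-aggregate of the pieces $T_{m\one_J} f$ in $L^p(\R,w;Y)$.

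The second step is to convert the $V^s$-control on $m|_J$ into a linear combination of interval-indicator multipliers over a refinement of $J$. Using Abel summation, $m|_J$ is well-approximated by $\sum_k a_{J,k}\one_{I_{J,k}}$, where the $I_{J,k}\subseteq J$ are subintervals and the coefficients $a_{J,k} \in \mathcal{L}(X,Y)$ lie in a set which is $\mathcal{R}$-bounded by a constant multiple of $[m|_J]_{V^s(J;\mathcal{L}(X,Y))}$ (together with $\Ran(m)$, using the hypothesis). Correspondingly, $T_{m\one_J} f$ becomes a finite sum of frequency projections $S_{I_{J,k}} f$ post-composed with the $\mathcal{R}$-bounded operators $a_{J,k}$.

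The third step is the key one. Apply the weighted vector-valued LPR-type estimate to the family $\{S_{I_{J,k}}\}$ to majorize a Rademacher square function of these projections by $\|f\|_{L^p(\R,w;X)}$, and use the $\mathcal{R}$-boundedness of the coefficients $a_{J,k}$ together with $\Ran(m)$ to pull the operator values outside the Rademacher average. Here the type $t$ on the $Y$-side and cotype $q$ on the $X$-side let one exchange Rademacher and $\ell^{r}$-sums through a Kahane--Khintchine / type / cotype chain; the identity $\tfrac{1}{r}=\tfrac{1}{t}-\tfrac{1}{q}$ is precisely what ensures this exchange consumes only $\|m\|_{\ell^{r}(\dot V^{s}(\Delta;\mathcal{L}(X,Y)))}$. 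The threshold $s<\tfrac{2}{2-\theta}$ is dictated by the behaviour of the LPR-type estimate on $X=[X_0,H]_\theta$: for $\theta = 1$ one recovers the classical $s < 2$, while $\theta \to 0$ (approach to a Hilbert space) relaxes $s$ to be arbitrarily large. The weighted conclusion on $L^p(\R,w;Y)$ for $p \in (s,\infty)$ and $w \in A_{p/s}$ is the natural scale on which the variational Carleson inequality delivers its estimate.

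For the second assertion, I would dualize. Applying the already-proved bound to the adjoint symbol $m^\ast : \R \to \mathcal{L}(Y^\ast,X^\ast)$, which has the same $\ell^{r}(\dot V^{s})$-seminorm and an $\mathcal{R}$-bounded range, and noting that $Y^\ast$ is $\theta$-intermediate UMD of cotype $t'$ while $X^\ast$ is UMD of type $q'$ (with $\tfrac{1}{q'}-\tfrac{1}{t'} = \tfrac{1}{t}-\tfrac{1}{q} = \tfrac{1}{r}$), yields by duality $T_m : L^p(\R;X) \to L^p(\R;Y)$ for $p \in (1,s')$. Combined with the first part, which already handles $p \in (s,\infty)$, and letting $s \downarrow 1$ within the allowed range, this covers the full range $p \in (1,\infty)$.

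The main obstacle is Step 3: producing and applying a weighted vector-valued LPR estimate with $A_{p/s}$-weights whose exponents are adapted to $\theta$-intermediate UMD geometry. This is where the weighted variational Carleson inequality of the paper is indispensable, and it is where the interplay between the type/cotype exponents $(t,q)$ and the variational exponent $s$ has to be tuned carefully so that the summability $\ell^{r}$ with $\tfrac{1}{r}=\tfrac{1}{t}-\tfrac{1}{q}$ emerges as the sharp condition on $m$.
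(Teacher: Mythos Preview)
Your global architecture---dyadic Littlewood--Paley on both sides, type $t$ of $Y$ and cotype $q$ of $X$ to pass between Rademacher sums and $\ell^t/\ell^q$-sums, H\"older producing the $\ell^r$-factor, and duality for the second assertion---matches the paper. But the local mechanism in Steps~2--3 has a genuine gap. The LPR-type estimate that the paper extracts from the variational Carleson inequality (Theorem~\ref{thm:RubioX}) is an $\ell^q$-of-\emph{norms} bound
\[
\Big\|\Big(\sum_{I\in\mathcal I}\|S_I f\|_X^{q}\Big)^{1/q}\Big\|_{L^p(\R,w)}\lesssim\|f\|_{L^p(\R,w;X)},\qquad q>\tfrac{2}{\theta},
\]
and \emph{not} a Rademacher estimate $\E\|\sum_I\varepsilon_I S_I f\|\lesssim\|f\|$. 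The latter is the $\LPR_p$ property of \cite{BGT03}, which is unknown for general $\theta$-intermediate UMD spaces (cf.\ the discussion after Proposition~\ref{prop:RubioBGT}). Hence you cannot ``pull the operators $a_{J,k}$ out of a Rademacher average'' via $\mathcal R$-boundedness: there is no Rademacher average over arbitrary subintervals available, and the reconstruction $\|\sum_k g_k\|\lesssim\E\|\sum_k\varepsilon_k g_k\|$ fails for non-dyadic frequency pieces. The paper's local step (Theorem~\ref{thm:boundedsupp}) avoids $\mathcal R$-boundedness entirely: it uses the embedding $V^s\hookrightarrow R^{s_1}$ and the atomic form $a=\sum_I c_I\one_I$ with $\sum_I\|c_I\|^{s_1}\le1$, then the pointwise H\"older bound $\|\sum_I c_I S_I f\|_Y\le(\sum_I\|S_I f\|_X^{s_1'})^{1/s_1'}$, and closes with the $\ell^q$-LPR for $q=s_1'$. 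The constraint $s<\tfrac{2}{2-\theta}$ is precisely $s'>\tfrac{2}{\theta}$, which is the admissible range for $q$ in Theorem~\ref{thm:RubioX}.

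The $\mathcal R$-boundedness hypothesis on $\Ran(m)$ enters elsewhere, and your sketch misses this: since only the seminorm $[m|_J]_{V^s}$ is controlled, the paper decomposes $m=\wt m+\bar m$ with $\bar m=\sum_{J\in\Delta}m_J\one_J$ (dyadic averages) and $\wt m$ satisfying $\|\wt m|_J\|_{L^\infty}\le[m|_J]_{V^s}$. The oscillatory part $\wt m$ is handled as above; the piecewise-constant part $T_{\bar m}$ is bounded via the \emph{dyadic} Littlewood--Paley inequality (which \emph{is} a Rademacher estimate) together with $m_J\in\overline{\conv}(\Ran(m))$ and the assumed $\mathcal R$-bound. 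One minor slip: you have the $\theta$-direction reversed---$\theta\to1$ is the Hilbert endpoint (giving $s<2$), while $\theta\to0$ forces $s\to1$.
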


Theorem \ref{thm:mainintro} is established in the main text as Theorem \ref{thm:mainthm}. One can assume without loss of generality that $r>s$. Moreover, the result is applicable to $m(\xi) = \rm{sign}(\xi)$, so the UMD property of $X$ and $Y$ is necessary in general (see \cite[Chapter 5]{HNVW1}).

Our result is the first of its kind on general $\theta$-intermediate UMD spaces. As discussed at the end of Subsection \ref{ss:introFouriervector}, previous results in the operator-valued setting either only apply to Banach function spaces or include assumptions on $m$ in the interpolation endpoints of $X=[X_0,H]_\theta$.

Compared to the operator-valued analogue of Theorem \ref{5.8} (see \cite{HHN,SW07} and \cite[Theorem 8.3.9]{HNVW2}), we note that to weaken the $V^1$-boundedness to $V^s$-boundedness we need to impose a decay condition at zero and infinity in terms of $\ell^r$-summability unless $r=\infty$. On the other hand, we do not need to use any Minkowski functionals related to an  $\mathcal{R}$-bounded family $\mathcal{T}$ containing the range of $m$ in its linear span.

We do not know if the decay condition in Theorem \ref{thm:mainintro} is necessary. It can, for example, be avoided if
 $X$ has cotype $2$ and $Y$ has type $2$. It can also be omitted if the $L^p$-spaces are replaced by homogeneous Besov spaces $\dot{B}^{\alpha}_{p,q}$, in which case the condition on $m$ takes the simpler form
\[\sup_{J\in \Delta}\|m\|_{V^{s}(J;\mathcal{L}(X,Y))}<\infty.\]
Note that in this case type and cotype no longer play a role in the assumptions on $m$. A similar result holds for the inhomogeneous Besov spaces (see Theorem \ref{thm:Besov}) and in the Triebel--Lizorkin scale (see Theorem \ref{thm:TL}).

\bigskip

Theorem~\ref{thm:mainintro} yields various Fourier multipliers that are not covered by the Marcinkiewicz or Mihlin multiplier theorems. Indeed, for $s>1$, the space $V^s$ is substantially larger than $V^1$. There is also an { $L^1$-approach} to construct multipliers outside the Marcinkiewicz–Mihlin class, based on the $L^1$-integrability of $\F^{-1}m$ or $\F^{-1}(\one_J m)$ in the strong operator topology. The $L^1$-approach plays a central role in \cite{GW03Besov,GW03}, where Fourier multiplier theorems on Besov and Lebesgue–Bochner spaces are obtained under the assumption that $X$ and $Y$ have Fourier type. One of the main building blocks there is a Steklin type multiplier theorem:
 $$m\in B^{1/p}_{p,1}(\R;\calL(X,Y)) \rightarrow \mathcal{F}^{-1}mx\in L^1(\R;Y) \text{ and } \mathcal{F}^{-1}my^* \in L^1(\R;X^*), \quad x\in X,y^*\in Y^*.$$
  Since $B^{1/p}_{p,1}(\R;\calL(X,Y))$ embeds into the space of continuous functions (\cite[Corollary 14.4.27]{HNVW3}), the multipliers satisfying the assumptions of Theorem~\ref{thm:mainintro}, being not necessarily continuous, do not fit into this $L^1$-approach.

On the other hand, in the next example we show that Theorem~\ref{thm:mainintro} does not fully cover all multipliers satisfying the assumptions in the operator-valued Mihlin or Marcinkiewicz multiplier theorems. This is  due to the decay condition at infinity in case $r<\infty$ in Theorem \ref{thm:mainintro}.
\begin{example}\label{ex:multMR}
Let $X = \ell^p$ with $p\in (1, \infty)$ and let $(e_n)_{n\in \N}$ be the canonical basis of $ \ell^p$. Let $A$ be a multiplication operator on $\ell^p$ given by $A e_n = 2^n  e_n$ for $n\in \N$ with its natural domain $D(A) = \{x\in \ell^p: A x\in \ell^p\}$. Define $$m(\xi) = A (i\xi+ A)^{-1}, \quad \xi \in \R.$$ Then we have for $n \in \N$,
\begin{align*}
\|m(2^{n+1}) - m(2^n)\|_{\calL(\ell^p)} &\geq \|m(2^{n+1}) e_n - m(2^n) e_n\|_{\ell^p}
\\ & = |2^n (i 2^{n+1} + 2^n)^{-1} - 2^n (i 2^{n} + 2^n)^{-1}|
\\ & = |(2 i + 1)^{-1} - (i  + 1)^{-1}| = \frac{1}{\sqrt{10}}.
\end{align*}
Since $m$ is continuous, we find that $[m]_{V^s(J;\calL(\ell^p))}\geq \frac{1}{\sqrt{10}}$ for every dyadic interval $J\in \Delta$. Therefore, $m \notin \ell^{r}(\dot{V}^{s}(\Delta;\mathcal{L}(\ell^p)))$ for $r<\infty$. However, it is known that $m$ satisfies the operator-valued Mihlin or Marcinkiewicz multiplier theorem conditions, see, for instance, \cite[Proposition 6.1.3]{KLW}.
\end{example}

We conclude this subsection with an example that does not seem to be covered by any existing result. The closest statement we are aware of is \cite[Example 5.19]{alv2}, but there the hypotheses on $m$ are of a different nature. In the scalar-valued case, the result from \cite{alv2} is considerably stronger.
\begin{example}
Fix $t\in (1, 2)$ and let $X = S^t$ be the Schatten class on $\ell^2$. Then $X$ has type $t$ and cotype $2$ (see \cite[Proposition 7.1.11]{HNVW2}), and it is $\theta$-intermediate UMD for all $\theta\in (0,2-\frac{2}{t})$ (see \cite[Propositions 5.4.2 and D.3.1]{HNVW1}). Set $\frac{1}{r} = \frac1t - \frac12$ and choose $s\in (1, t)$. Theorem \ref{thm:mainintro} then yields that if $m\in \ell^{r}(\dot{V}^{s}(\Delta;\mathcal{L}(S^t)))$ and $m$ has $\mathcal{R}$-bounded range, then $T_m$ is bounded on $L^p(\R;S^t)$ for all $p\in (1,\infty)$.
An analogous statement holds for $t\in (2,\infty)$ and follows either by a similar argument or by duality.
\end{example}

\subsection{Methodology}

To prove Theorem \ref{thm:mainintro}, we establish a new vector-valued estimate of Littlewood--Paley--Rubio de Francia type, which in some sense is weaker and thus easier to check than the $\LPR_p$-property, as will be explained below Proposition \ref{prop:RubioBGT}.
\begin{theorem}\label{thm:RubioX}
Let $\theta\in (0,1]$, $q\in (2/\theta, \infty)$, $p\in (q', \infty)$ and let $w\in A_{p/q'}$. Let $X$ be a $\theta$-intermediate $\UMD$ Banach space and let $\mathcal{I}$ be a family of disjoint intervals of $\R$. Then there exists an increasing function $\phi_{X,p,q}:[1, \infty)\to [1, \infty)$ such that
\begin{align}\label{eq:RubioX}
  \Big\|\big(\sum_{I\in \mathcal{I}}\|S_{I}f\|^{q}_X\big)^{\frac{1}{q}}\Big\|_{L^p(\R,w)}\leq \phi_{X,p,q}([w]_{A_{p/q'}}) \|f\|_{L^p(\R,w;X)}.
\end{align}
\end{theorem}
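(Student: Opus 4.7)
The plan is to deduce Theorem~\ref{thm:RubioX} directly from the weighted vector-valued variational Carleson inequality, which the paper advertises as its main technical tool. The guiding observation is elementary: the $\ell^q$-sum over a disjoint family of spectral projections is pointwise dominated by the $q$-variation of the one-sided partial Fourier integrals, so given the variational Carleson inequality, Theorem~\ref{thm:RubioX} follows almost immediately.

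Concretely, for $N\in\R$ let $P_N f := \mathcal{F}^{-1}(\one_{(-\infty,N]}\widehat f)$ denote the one-sided partial Fourier integral, so that $S_{(a,b]}f = P_b f - P_a f$ whenever $a<b$. Every family of pairwise disjoint intervals in $\R$ is at most countable (each contains a rational), so we may enumerate $\mathcal{I}=\{I_k\}_k$ with $I_k=(a_k,b_k]$ in increasing order of left endpoints, giving $a_1\le b_1\le a_2\le b_2\le\cdots$. Applying the definition of $q$-variation to the finite increasing sequence $a_1<b_1\le a_2<\cdots<b_N$ and then letting $N\to\infty$ by monotone convergence yields, pointwise a.e.,
\begin{equation*}
\Big(\sum_{I\in\mathcal{I}}\|S_I f(x)\|_X^q\Big)^{1/q}=\Big(\sum_k\|P_{b_k}f(x)-P_{a_k}f(x)\|_X^q\Big)^{1/q}\le V^q(P_\cdot f)(x).
\end{equation*}

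Under the hypotheses $q>2/\theta$, $p>q'$ and $w\in A_{p/q'}$, the weighted vector-valued variational Carleson inequality, applied with variation exponent $s=q$, provides the bound
\begin{equation*}
\|V^q(P_\cdot f)\|_{L^p(\R,w)}\le \phi_{X,p,q}([w]_{A_{p/q'}})\,\|f\|_{L^p(\R,w;X)}.
\end{equation*}
Taking the $L^p(\R,w)$-norm of the pointwise bound above and invoking this estimate yields the desired inequality~\eqref{eq:RubioX}.

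The main obstacle is the variational Carleson inequality itself, which forms the technical core of the paper and which I expect requires a delicate complex interpolation between the Hilbert endpoint $H$ (via a L\'epingle- or Oberlin--Seeger--Tao--Thiele--Wright-type inequality) and a suitable bound on the UMD endpoint $X_0$, matching the interpolation relation $1/s = \theta/2$ at the critical value. Beyond that, the only subtlety in the above reduction is ensuring $P_N f$ is well defined pointwise a.e.\ and that the resulting enumeration and interchange of sums with limits are justified; these are routine and handled by first treating $f$ in the Schwartz class and then extending by density.
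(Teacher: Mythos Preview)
Your approach is exactly the one taken in the paper: Theorem~\ref{thm:RubioX} is stated there as an immediate consequence of the weighted vector-valued variational Carleson inequality (Theorem~\ref{thm:varcarleson}), using precisely the pointwise domination of the $\ell^q$-sum over a disjoint family by the variational Carleson operator that you spell out. The paper even builds this into its definition of $\mathcal{C}_*^q$, writing $\mathcal{C}_*^q f(x)=\sup_{\mathcal{I}}\big(\sum_{I\in\mathcal{I}}\|S_I f(x)\|_X^q\big)^{1/q}$, so the deduction is a one-liner once Theorem~\ref{thm:varcarleson} is in hand.
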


The excluded  endpoint $q=2$ when $\theta=1$ (i.e.\ $X$ is a Hilbert space) can be found in \cite[Proposition 6.6]{ALV}, which is a weighted vector-valued extension of \eqref{eq:Rubioscalar}. Theorem \ref{thm:RubioX} in the scalar case with  $w=1$ can be found in \cite{Rubio} and weights were added in \cite{Krol}.
The limiting case $p=q'$ was shown to be false in \cite{CT}.

 We will establish Theorem \ref{thm:RubioX} as a special case of the weighted boundedness of the  variational Carleson operator, see Theorem \ref{thm:varcarleson}. Compared to \eqref{eq:RubioX}, the variational Carleson operator has an additional supremum over all $\mathcal{I}$ inside the $L^p$-norm. The analogue for $q=\infty$ is usually called Carleson's inequality, and is the main tool to establish almost everywhere convergence of Fourier series \cite{Carleson, Hunt}.
The (weighted) boundedness of the variational Carleson operator in the scalar-valued case was established in  \cite{DiDoUr, DoLac, ObSeTaThWr}.
The unweighted vector-valued boundedness of the Carleson operator (i.e.\ Theorem~\ref{thm:RubioX} with 
	$q=\infty$) was established in \cite{HL} for intermediate UMD spaces. The unweighted vector-valued boundedness of the variational Carleson operator was studied in \cite{amenta2022}, which in particular implies \eqref{eq:RubioX} in the case  $w=1$.

We will establish the weighted boundedness of the variational Carleson operator by extending \cite{HL} to the weighted setting through \cite{Lorist21}, and afterwards interpolating the result with weighted bounds for the variational Carleson operator on $L^p(\R;H)$ for Hilbert spaces $H$ from \cite{ALV, DiDoUr} (see also \cite{LN22}).
We note that the following variant of \eqref{eq:RubioX} was developed in \cite[Section 2.2]{DengLorVer} in the periodic setting:
\begin{align}\label{eq:decompX}
  \Big(\sum_{I\in \mathcal{I}}\|S_{I}f\|^{q}_{L^p(\R;X)}\Big)^{\frac{1}{q}}\lesssim \|f\|_{L^p(\R;X)}.
\end{align}
By Minkowski's inequality, either \eqref{eq:RubioX} or \eqref{eq:decompX} is stronger, depending on the relative size of $p$ and $q$.

As a consequence of Theorem \ref{thm:RubioX}, we will show that for $m\in V^s(\R;\calL(X,Y))$ with $s\in[1,\frac{2}{2-\theta})$, one has that $T_m$ is bounded from $L^p(\R,w;X)$ to $L^p(\R,w;Y)$ for all $p\in (s,\infty)$ and $w\in A_{p/s}$ in Theorem \ref{thm:boundedsupp}. In the proof of this result, $\mathcal{R}$-boundedness of the range of $m$ does not play any role. This is rather surprising, since it is known that the  $\mathcal{R}$-boundedness is necessary for the boundedness of $T_m$ (see \cite{Clement} and \cite[Theorem 5.3.15]{HNVW1}). In Theorems \ref{thm:mainintro} and \ref{thm:mainthm}, $\mathcal{R}$-boundedness enters in an indirect way by a decomposition of $m$, where one part in the decomposition is constant on dyadic intervals.

 In Proposition \ref{R-bdd of R}, we will show that in the setting of Theorem \ref{thm:mainintro}, one actually has that any $m\in \ell^{r}(R^{r}(\mathcal{J};\mathcal{L}(X,Y)))$ has $\mathcal{R}$-bounded range, where $\mathcal{J}$ is a collection of disjoint intervals in $\R$. This is a new result on $\mathcal{R}$-boundedness which complements the work of \cite{HytVer} and came as a surprise to the authors. Here $R^r$ is the space used in \cite{coifman} defined via certain atoms, for which one has $R^s\hookrightarrow V^{s}\hookrightarrow R^{r}$ with $s<r$. Motivated by the above, we will prove a general embedding result for Besov spaces and $V^r$ and $R^r$ in Lemma \ref{lem:BesovV}.

\subsection{Organization} In Section \ref{sec:m-prelim} we present some preliminaries. Moreover, we introduce $V^s$ and $R^s$, and prove some new results for them. Then in Section \ref{sec:m-R} we introduce $\mathcal{R}$-boundedness and give a new sufficient condition for it in terms of the $V^s$ and $R^r$-regularity of a function. In Section \ref{sec:m-carleson} we prepare for our main result by establishing a weighted version of the vector-valued variational Carleson estimate, which, in particular, implies a weighted version of Theorem \ref{thm:RubioX}. Afterwards, we establish our main result in Section \ref{sec:m-main}.
At the end of the paper we list several open problems.

\section{Preliminaries}\label{sec:m-prelim}
\subsection{Notation}
The space of bounded linear operators between complex Banach spaces \(X\) and \(Y\) is denoted by \(\mathcal{L}(X, Y)\), and \(\mathcal{L}(X) := \mathcal{L}(X, X)\). We write $X\hookrightarrow Y$ if $X$ embeds in $Y$ continuously. The space of \(X\)-valued tempered distributions is denoted by \(\mathcal{S}'(\R; X)\).
The Fourier transform of $f \in \mathcal{S}'(\mathbb{R}; X)$ is denoted by $\mathcal{F}f$ or $\widehat{f}$. If $f \in L^1(\mathbb{R}; X)$, then
$$
\widehat{f}(\xi) := \int_{\mathbb{R}} e^{-2\pi i \xi t} f(t)\, \mathrm{d}t, \quad \xi \in \mathbb{R}.
$$
Let $\Delta$ denote the dyadic partition of $\mathbb{R}$, i.e.
$ \Delta := \bigcup_{k \in \mathbb{Z}} \pm [2^k, 2^{k+1}).$ The restriction of a function $f:\R\to X$ to $J\subseteq \R$ is defined by $f|_J$. The range of $f$ is denoted by \(\Ran(f)\).

Constants depending on parameters \(a, b, \ldots\) are denoted by \(C_{a, b, \ldots}\), and their values may vary from line to line. We will often suppress the constant $C_{a, b, \ldots}$ by writing $\lesssim_{a,b,\ldots}$. We write $\phi_{a,b,\ldots}\colon [1,\infty)\to [1,\infty)$ to denote a non-decreasing function
which depends only on the parameters $a, b, \ldots$ and which may also
change from line to line.

\subsection{Weights}
For $p\in(1,\infty)$, we define the \emph{ Muckenhoupt $A_p$-class} as the class of all locally integrable  $w \colon \R \to (0,\infty)$ such that
$$[w]_{A_p}:=\underset{J}{\sup}\Bigl(\frac{1}{|J|}\int_{J} w(x)\ud x\Bigr)\Bigl(\frac{1}{|J|}\int_{J} w(x)^{1-p'}\ud x\Bigr)^{p-1}<\infty,$$
where the supremum is taken over all bounded intervals $J\subseteq \mathbb{R}$.  The Muckenhoupt classes are increasing in $p$, i.e. if $p_0<p_1$, then
$A_{p_0}\subseteq A_{p_1}$  with $[w]_{A_{p_1}} \leq [w]_{A_{p_0}}$.
Let $A_{\infty} = \bigcup_{p>1} A_p$.

We have the following self-improvement lemma (see \cite[Theorem 1.2]{HPR12} and \cite[Corollary 7.2.6]{Grafakos}).
\begin{lemma}\label{self-improvement}
Let $p\in(1,\infty)$,  then for any $w \in A_{p}$ we have
$$
[w]_{A_{p-\varepsilon}} \lesssim_{p}[w]_{A_p}, \quad 0\leq \varepsilon \leq \frac{p-1}{1+C[w]_{A_p}^{(p-1)^{-1}}}.
$$
\end{lemma}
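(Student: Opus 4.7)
The plan is to deduce the lemma from a quantitative reverse Hölder inequality combined with $A_p$–$A_{p'}$ duality. First, the dual weight $\sigma := w^{1-p'}$ lies in $A_{p'}$ with the well-known identity $[\sigma]_{A_{p'}} = [w]_{A_p}^{1/(p-1)}$, so in particular $[\sigma]_{A_\infty} \lesssim [w]_{A_p}^{1/(p-1)}$. By the sharp reverse Hölder inequality of Hytönen–Pérez (\cite[Theorem 1.1]{HPR12}), there is an absolute constant $C$ such that for every
\[
\delta\in\Bigl[0,\frac{1}{1+C[w]_{A_p}^{1/(p-1)}}\Bigr],
\]
one has
\[
\Bigl(\frac{1}{|J|}\int_J \sigma^{1+\delta}\ud x\Bigr)^{1/(1+\delta)} \leq 2\cdot \frac{1}{|J|}\int_J \sigma\ud x
\]
uniformly in bounded intervals $J\subseteq \R$.

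With this in hand, I would pick $\varepsilon\in[0,p-1)$ matching $\delta$ via $p-\varepsilon-1 = (p-1)/(1+\delta)$, equivalently $\varepsilon = \delta(p-1)/(1+\delta)$. A short computation shows that the admissible range of $\delta$ translates exactly into the range of $\varepsilon$ stated in the lemma (after adjusting the absolute constant $C$). The key algebraic identity is
\[
w^{1-(p-\varepsilon)'} = w^{-1/(p-\varepsilon-1)} = \sigma^{(p-1)/(p-\varepsilon-1)} = \sigma^{1+\delta}.
\]
Inserting this into the reverse Hölder estimate above and raising both sides to the power $p-\varepsilon-1$ gives
\[
\Bigl(\frac{1}{|J|}\int_J w^{1-(p-\varepsilon)'}\ud x\Bigr)^{p-\varepsilon-1} \leq 2^{p-1}\Bigl(\frac{1}{|J|}\int_J \sigma\ud x\Bigr)^{p-1}.
\]
Multiplying by $\frac{1}{|J|}\int_J w\ud x$ and taking the supremum over intervals $J$ yields $[w]_{A_{p-\varepsilon}} \leq 2^{p-1}[w]_{A_p}$, which is the desired conclusion with an implicit constant depending only on $p$.

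The substantive content sits entirely in the sharp reverse Hölder inequality, with its explicit dependence of the exponent improvement on $[\sigma]_{A_\infty}$; the remaining manipulations are bookkeeping with exponents using the $A_p$–$A_{p'}$ duality. Thus the hard part is not in this lemma itself but in the quantitative $A_\infty$-theory developed in \cite{HPR12}, after which the conclusion follows almost immediately.
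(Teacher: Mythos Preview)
Your argument is correct and is precisely the standard route: apply the sharp reverse H\"older inequality of \cite{HPR12} to the dual weight $\sigma=w^{1-p'}$, then convert the exponent improvement back to the $A_{p-\varepsilon}$ constant via the identity $w^{1-(p-\varepsilon)'}=\sigma^{1+\delta}$. The paper does not give its own proof of this lemma; it simply cites \cite[Theorem~1.2]{HPR12} and \cite[Corollary~7.2.6]{Grafakos}, which together amount to exactly the argument you wrote out. One small remark on bookkeeping: the correspondence $\varepsilon=\delta(p-1)/(1+\delta)$ actually gives the equivalence $\varepsilon\leq \frac{p-1}{1+C[w]_{A_p}^{1/(p-1)}}\iff \delta\leq \frac{1}{C[w]_{A_p}^{1/(p-1)}}$ (not $\delta\leq \frac{1}{1+C[w]_{A_p}^{1/(p-1)}}$), but as you note this is harmless after adjusting~$C$.
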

For a Banach space $X$,  $p\in(1,\infty)$ and $w\in A_p$ we define $L^{p}(\R,w;X)$ as the space of all strongly measurable $f \colon \R \to X$ such that
\[
\|f\|_{L^{p}(\R,w;X)}:=\Big(\int_{\R}\|f(x)\|_{X}^{p}w(x)\ud x\Big)^{1/p}<\infty.
\]
Recall from \cite[Lemma 3.3]{FHL} that $\mathcal{S}(\R;X)$ is dense in $L^p(\R,w; X)$.

\subsection{UMD spaces}
For an introduction to $\UMD$ Banach spaces the reader is referred to \cite[Chapters 4 and 5]{HNVW1} and \cite{Pi16}. We briefly recall some basics.
Let $X$ be a Banach space. We denote by  $\mathcal{H}$ the Hilbert transform, for $f\in \mathcal{S}(\R;X)$ given by the principal value integral
\begin{align}\label{Hilbert-transform}
\mathcal{H}f(x):=\pv \frac{1}{\pi}\int_{\R}\frac{f(y)}{x-y}\dd y,\quad x\in\R.
\end{align}
A Banach space is said to have the {\em $\UMD$ property} if for some (equivalently all) $p \in (1,\infty)$  one has that the Hilbert transform extends to a bounded operator on $L^p(\R;X)$. By the seminal works of Burkholder \cite{Burkholder} and Bourgain \cite{Bourgainmartingale}, this is equivalent to the original definition of the $\UMD$ property in terms of the {U}nconditionality of {M}artingale {D}ifferences.

All reflexive  Lebesgue, Sobolev, Besov and Triebel--Lizorkin spaces are  $\UMD$ Banach spaces. The $\UMD$ property implies reflexivity, so $L^1(\R)$ and $L^{\infty}(\R)$ are not $\UMD$ Banach spaces.

Recall that for an interval $I\subseteq \R$, $S_I$ is defined as the Fourier multiplier operator with symbol $m = \one_I$. The vector-valued analogue of the Littlewood–Paley inequality for $\UMD$ spaces was obtained in \cite{Bourgain714} (see also \cite[Chapter 5]{HNVW1}). The following weighted extension can be found in \cite[Theorem 3.4]{FHL}.
\begin{proposition}\label{prop:LPw}
Let $X$ be a $\UMD$ Banach space, $(\varepsilon_J)_{J\in \Delta}$ be a Rademacher sequence, $p\in (1,\infty)$ and $w\in A_p$. Then we have for all $f \in L^p(\R,w;X)$,
\begin{align*}
\mathbb{E}\big\|\sum_{J\in \Delta}\varepsilon_J S_J f\big\|_{L^p(\mathbb{R},w;X)}&\lesssim_{X,p}[w]^{2\max\{1,\frac{1}{p-1}\}}_{A_p}\left\|f\right\|_{L^p(\mathbb{R},w;X)},\\
\left\|f\right\|_{L^p(\mathbb{R},w;X)}&\lesssim_{X,p}[w]^{2\max\{1,\frac{1}{p-1}\}}_{A_p} \mathbb{E}\big\|\sum_{J\in \Delta}\varepsilon_J S_J f\big\|_{L^p(\mathbb{R},w;X)}.
\end{align*}
\end{proposition}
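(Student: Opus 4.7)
The plan is to deduce both estimates from a single uniform-in-signs Fourier multiplier bound, exploiting the involutive structure of the randomized dyadic projection.

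For each realization $\omega\in\Omega$, set $T_\omega := \sum_{J\in\Delta} \varepsilon_J(\omega)\, S_J$, the Fourier multiplier associated with the $\{-1,+1\}$-valued symbol $m_\omega(\xi) = \sum_J \varepsilon_J(\omega)\one_J(\xi)$. The heart of the argument is the uniform-in-$\omega$ weighted bound
\[
\|T_\omega\|_{L^p(\R,w;X)\to L^p(\R,w;X)} \lesssim_{X,p} [w]_{A_p}^{2\max\{1,1/(p-1)\}}. \qquad(\star)
\]
Given $(\star)$, the first (upper) inequality follows immediately since $\mathbb{E}\|T_\omega f\|_{L^p(\R,w;X)} \le \sup_\omega \|T_\omega\|\cdot \|f\|_{L^p(\R,w;X)}$. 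For the reverse inequality I would observe that $T_\omega$ is an involution on $L^p(\R,w;X)$: since $S_JS_K=0$ for disjoint $J,K\in\Delta$ and $S_J^2 = S_J$, and since $\sum_J S_J$ converges unconditionally to the identity on $L^p(\R,w;X)$ for UMD $X$ and $w\in A_p$ (a consequence of the uniform boundedness of the partial sums, which reduce to weighted Hilbert-transform combinations applied to truncated frequencies), one obtains $T_\omega^2 = \sum_J \varepsilon_J(\omega)^2 S_J = I$. Consequently, for every $\omega$,
\[
\|f\|_{L^p(\R,w;X)} = \|T_\omega(T_\omega f)\|_{L^p(\R,w;X)} \le \|T_\omega\|\cdot \|T_\omega f\|_{L^p(\R,w;X)},
\]
and taking expectations on the right yields the second estimate with the \emph{same} constant.

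To establish $(\star)$, my plan is to decompose each interval projection via modulated Hilbert transforms: for $I=[a,b)$ one has $S_I = \tfrac{i}{2}(M_a \mathcal{H} M_{-a} - M_b \mathcal{H} M_{-b})$, where the modulation $M_c g(x) := e^{-2\pi i c x} g(x)$ is an $L^p(\R,w;X)$-isometry. This rewrites the random Marcinkiewicz symbol $m_\omega$ as a dyadic combination of modulated Hilbert transforms. Organizing this combination via a Bourgain-type tree argument and invoking the sharp weighted Hilbert transform bound
\[
\|\mathcal{H}\|_{L^p(\R,w;X)\to L^p(\R,w;X)} \lesssim_{X,p} [w]_{A_p}^{\max\{1,1/(p-1)\}},
\]
valid for UMD $X$ (for example, via Hytönen's dyadic representation theorem combined with Petermichl's scalar bound and weighted Rubio de Francia extrapolation) \emph{exactly twice} produces the claimed exponent $2\max\{1,1/(p-1)\}$ in $(\star)$.

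The main obstacle is arranging $(\star)$ so that the sharp weighted Hilbert transform bound is used precisely twice. A direct application of weighted extrapolation to the unweighted UMD Marcinkiewicz theorem is not sharp and typically produces strictly larger polynomial dependence on $[w]_{A_p}$. Hitting the exponent $2\max\{1,1/(p-1)\}$ requires the careful quantitative bookkeeping implemented in the proof of \cite[Theorem~3.4]{FHL}, where one splits $T_\omega$ into two factorized layers and applies the sharp weighted Hilbert transform estimate once in each layer.
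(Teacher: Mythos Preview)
The paper does not give its own proof of this proposition; it simply records the result as \cite[Theorem~3.4]{FHL}. Your outline is correct and is in fact a faithful sketch of the argument carried out in that reference: one reduces both inequalities to the uniform-in-signs bound $(\star)$ via the involution $T_\omega^2=I$, and then obtains $(\star)$ by expressing the dyadic sign-multiplier through two layers of modulated Hilbert transforms, applying the sharp weighted Hilbert transform estimate $\|\mathcal{H}\|_{L^p(\R,w;X)\to L^p(\R,w;X)}\lesssim_{X,p}[w]_{A_p}^{\max\{1,1/(p-1)\}}$ once in each layer to produce the exponent $2\max\{1,1/(p-1)\}$. Since you yourself point to \cite[Theorem~3.4]{FHL} for the precise bookkeeping, there is nothing to correct; your write-up supplies strictly more detail than the paper does.
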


In the next definition we will use the complex interpolation method $[X,Y]_\theta$, for which we refer to \cite{BL}.
\begin{definition}
Let $\theta\in(0,1]$.	A Banach space $X$ is called a  {\rm $\theta$-intermediate $\UMD$  Banach space} if it can be written as $X=[Z,H]_{\theta}$ for a $\UMD$ Banach space $Z$ and a Hilbert space $H$.
\end{definition}
If $\theta =1$ we have $X=H$, i.e. $X$ is a Hilbert space. If $X$ is $\theta$-intermediate $\UMD$, then its dual is also $\theta$-intermediate $\UMD$ (see \cite[Corollary 4.5.2]{BL}). Since the $\UMD$ property is stable under complex interpolation, any intermediate $\UMD$ Banach space is a $\UMD$ Banach space. Conversely, all $\UMD$ Banach \emph{function} spaces are intermediate $\UMD$ Banach function spaces by a result of Rubio de Francia \cite{Rubio86}. For general Banach spaces this is an open problem.

\subsection{Type and cotype}
Let $(\varepsilon_n)_{n\ge 1}$ be the Rademacher sequence.
A Banach space $X$ is said to have {\it type $t \in[1, 2]$} if there exists a constant $C\ge1$ such that for all finite sequences $(x_n)_{n=1}^N$ in $X$, we have
$$\Bigl(\mathbb{E}\nrmb{\sum_{n=1}^N\varepsilon_n x_n}^2\Bigr)^{\frac{1}{2}}\leq C\,\Bigl(\sum_{n=1}^N\|x_n\|^t\Bigr)^{\frac{1}{t}}.$$
A Banach space $X$ is said to have {\it cotype ${q} \in[2,\infty]$} if there exists a constant $C\ge1$ such that for all finite sequences $(x_n)_{n=1}^N$ in $X$, we have
$$\Bigl(\sum_{n=1}^N\|x_n\|^{{q}}\Bigr)^{\frac{1}{{q}}}\leq C\,\Bigl(\mathbb{E}\nrmb{\sum_{n=1}^N\varepsilon_n x_n}^{{2}}\Bigr)^{\frac{1}{2}},$$
with the usual modification for ${q}=\infty$.
Every Banach space has type 1 and cotype $\infty$. The space $X$ has type 2 and cotype 2 if and only if $X$ is isomorphic to a Hilbert space (\cite[Proposition 3.1]{Kwapien}).
For $p\in [1, \infty)$ the (noncommutative) $L^p$-spaces are known to have type $p\wedge 2$ and cotype $p\vee 2$, see \cite{Fack87,HNVW2}.

Finally, we note that for a $\sigma$-finite measure space $(S,\mu)$, if $X$ has type $t$ and $p\in [1, \infty)$, then $L^p(S;X)$ has type $t\wedge p$. Moreover, if $X$ has cotype $q$ and $p\in [1, \infty]$, then  $L^p(S;X)$ has cotype $p\vee q$. In particular, the space $L^2(S;X)$ has the same type and cotype as $X$ (see \cite[Proposition 7.1.4]{HNVW2}).

\subsection{The function spaces \texorpdfstring{$R^s$, $V^s$ and $C^\alpha$}{Rs, Vs and Ca}}\label{sec:RVC}
In this subsection we introduce the spaces of bounded $s$-variation $V^s$ for $s \in [1, \infty)$, which will play a central role in the main result of the paper.

Let $X$ be a Banach space and $J\subseteq \mathbb{R}$, $s\in[1,\infty)$. For $f\colon \R \to X$ define
\begin{equation}\label{7.30}
\left[f\right]_{V^s(J;X)}  := \sup_{t_0<\cdots<t_{N}} \left(\sum_{i=0}^{N-1} \left\|f(t_{i+1}) - f(t_i)\right\|_X^s\right)^{1/s}.
\end{equation}
We write $f\in \dot{V}^{s}(J;X)$ if \eqref{7.30} is finite.
We say that $f$ has \emph{bounded $s$-variation for $s\in [1,\infty)$}, denoted by $f \in V^s(J;X)$, if
\begin{equation}\label{Vnorm}
\left\|{f}\right\|_{V^s(J;X)} := \left\|f\right\|_{L^\infty(J;X)} +  \left[f\right]_{V^s(J;X)} <\infty.
\end{equation}
Furthermore, we use the convention $V^\infty(J;X):= L^\infty(J;X)$. Note that $V^s(J;X)$ is a Banach space.

Given a collection of disjoint sets $\mathcal{J}$ in $\R$ and $r \in [1,\infty]$, let $\ell^r(V^s(\mathcal{J};X))$ and $\ell^r(\dot{V}^s(\mathcal{J};X))$ denote the space of all $f \in L^\infty(\R;X)$  such that
\begin{equation*}
\norm{f}_{\ell^r(V^s(\mathcal{J};X))} := \has{\sum_{J \in \mathcal{J}} \norm{f|_J}_{V^s(J;X)}^r}^{1/r} < \infty,
\end{equation*}
and \begin{equation*}
\norm{f}_{\ell^r(\dot{V}^s(\mathcal{J};X))} := \has{\sum_{J \in \mathcal{J}} [{f|_J}]_{V^s(J;X)}^r}^{1/r} < \infty,
\end{equation*}
with the usual modifications for $r=\infty$.

Now suppose that $J\subseteq \mathbb{R}$ is an \emph{interval}. We say that a function $a\colon J\to X$ is an \emph{$R^s(J;X)$-atom}, written as $a \in R^s_{\mathrm{at}}(J;X)$, if there exists a set $\mathcal{I}$ of mutually disjoint subintervals of $J$ and a set of vectors $(c_I)_{I \in \mathcal{I}}$ in $X$ such that
\begin{equation*}
a = \sum_{I \in \mathcal{I}} c_I \one_I \quad \text{and} \quad \has{ \sum_{I \in \mathcal{I}}  \norm{c_I}_X^s }^{1/s} \leq 1.
\end{equation*}	
Define $R^s(J;X)$ by    \begin{equation*}
R^s(J;X) := \cbraces{ f\in L^\infty(J;X): f = \sum_{k=1}^\infty \lambda_k a_k,  ( \lambda_k)_{k\geq 1} \in \ell^1, (a_k)_{k\geq 1} \subseteq R_{{\mathrm{at}}}^s(J;X)},
\end{equation*}
with norm
\begin{equation*}
\left\|{f}\right\|_{R^s(J;X)} := \inf\cbraceb{ \left\|( \lambda_k)_{k\geq 1}\right\|_{\ell^1} : \text{$f = \sum_{k=1}^\infty  \lambda_k a_k$ as above} }.
\end{equation*}
Again we let $R^\infty(J;X) = L^\infty(J;X)$. Note that $R^s(J;X)$ is a Banach space.

For a collection of  disjoint intervals $\mathcal{J}$ in $\R$ and $r \in [1,\infty]$, the space $\ell^r(R^s(\mathcal{J};X))$ consists of all $f \in L^\infty(\R;X)$ such that
\begin{equation*}
\left\|{f}\right\|_{\ell^r(R^s(\mathcal{J};X))} := \has{\sum_{J \in \mathcal{J}} \norm{f|_J}_{R^s(J;X)}^r}^{1/r} < \infty,
\end{equation*}
with the usual modification for $r=\infty$.

For $\alpha \in(0,1]$ we define the space of \emph{  $\alpha$-H{\"o}lder continuous functions} $C^{\alpha}(J;X)$ as the space of all $f\colon J \to X$ with
$$\|f\|_{C^{\alpha}(J;X)}:=\|f\|_{L^\infty(J;X)}+[f]_{C^{\alpha}(J;X)},$$
where $$[f]_{C^{\alpha}(J;X)}:=\sup_{t_1,t_2\in J, t_1\neq t_2}\frac{\|f(t_1)-f(t_2)\|_X}{|t_1-t_2|^{\alpha}}.$$
It is standard to check that $C^{\alpha}(J;X)$ is a Banach space.

The following embedding results can be found in  \cite[Lemma 4.3]{alv2} and \cite[Lemma 2]{coifman}.
\begin{lemma}\label{lemma:5.17}
Let $X$ be a Banach space, let $J\subseteq \mathbb{R}$ be an interval, and $s\in[1,\infty)$. Then
\begin{enumerate}[(i)]
	\item $R^{s}(J;X) \hookrightarrow V^{s}(J;X) $ and for all $f\in R^{s}(J;X)$,
	\begin{equation*}
		\|f\|_{V^{s}(J;X)} \lesssim \|f\|_{R^{s}(J;X)}.
	\end{equation*}
\item  $V^{s}(J;X)\hookrightarrow R^{t}(J;X)$ and for all $f\in V^{s}(J;X)$ and $t>s$,
\begin{equation*}
		\|f\|_{R^{t}(J;X)} \lesssim \|f\|_{V^{s}(J;X)}.
	\end{equation*}
\item $C^{\frac1s}(J;X)\hookrightarrow V^s(J;X)$ and for all $f\in C^{\frac1s}(J;X)$ we have
	\begin{equation*}
		\|f\|_{V^s(J;X)}\leq \|f\|_{L^\infty(J;X)} + |J|^{\frac1s} [f]_{C^{1/s}(J;X)}.
	\end{equation*}
\end{enumerate}
\end{lemma}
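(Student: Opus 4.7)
The plan is to handle the three embeddings in order of increasing difficulty, treating (iii) as a direct telescoping estimate, (i) by reducing to a uniform bound on atoms, and (ii) by a dyadic multi-scale atomic decomposition, which will be the main work.

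For (iii), I would simply fix a partition $t_0<\cdots<t_N$ in $J$ and estimate
\[
\sum_{i=0}^{N-1}\|f(t_{i+1})-f(t_i)\|_X^s \;\leq\; [f]_{C^{1/s}(J;X)}^s \sum_{i=0}^{N-1}(t_{i+1}-t_i) \;\leq\; [f]_{C^{1/s}(J;X)}^s\,|J|,
\]
so taking $s$-th roots yields $[f]_{V^s(J;X)}\leq |J|^{1/s}[f]_{C^{1/s}(J;X)}$, and adding the trivial $L^\infty$ estimate gives the claim.

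For (i), the key point is that $[\cdot]_{V^s(J;X)}$ is a seminorm (Minkowski in $\ell^s$) and $\|\cdot\|_{V^s(J;X)}$ a norm, so by the triangle inequality for series it suffices to prove a uniform bound $\|a\|_{V^s(J;X)}\lesssim 1$ for every $R^s$-atom $a=\sum_{I\in\mathcal{I}}c_I \one_I$. The $L^\infty$ bound is immediate: $\|a\|_\infty\leq \max_I\|c_I\|_X\leq (\sum_I\|c_I\|_X^s)^{1/s}\leq 1$. For the variation, given any partition $t_0<\cdots<t_N$, each nonzero term $\|a(t_{i+1})-a(t_i)\|_X^s$ is at most $2^{s-1}(\|c_I\|_X^s+\|c_{I'}\|_X^s)$ where $a(t_i),a(t_{i+1})\in\{0\}\cup\{c_I\}$; since the $I$'s are disjoint intervals, each $c_I$ serves as an ``incoming'' or ``outgoing'' value at most once as we sweep left to right, so each $\|c_I\|_X^s$ is charged at most twice, giving $[a]_{V^s(J;X)}^s\leq 2^s\sum_I\|c_I\|_X^s\leq 2^s$. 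Taking the infimum over atomic decompositions yields the claim.

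For (ii) I would normalize $\|f\|_{V^s(J;X)}\leq 1$ and build a nested sequence of partitions $\mathcal{P}_n$ of $J$ by greedily choosing breakpoints $t^n_0<t^n_1<\cdots$ such that the oscillation $\sup_{x,y\in[t^n_k,t^n_{k+1}]}\|f(x)-f(y)\|_X$ is just above $2^{-n}$; then $[f]_{V^s}\leq 1$ forces $\#\mathcal{P}_n\lesssim 2^{ns}$ by summing these oscillations to the $s$-th power. I would set $f_n$ to be the piecewise constant function equal to $f(t^n_k)$ on each interval of $\mathcal{P}_n$ (ensuring $\mathcal{P}_{n+1}$ refines $\mathcal{P}_n$), so that $\|f-f_n\|_\infty\lesssim 2^{-n}$ and the difference $f_{n+1}-f_n=\sum_{I\in\mathcal{P}_{n+1}}c_I\one_I$ satisfies $\|c_I\|_X\lesssim 2^{-n}$. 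Telescoping $f=f_0+\sum_{n\geq 0}(f_{n+1}-f_n)$, the $R^t$-atom normalization of the $n$-th increment is
\[
\Bigl(\sum_{I\in\mathcal{P}_{n+1}}\|c_I\|_X^t\Bigr)^{1/t}\lesssim 2^{-n}\,(2^{(n+1)s})^{1/t}\lesssim 2^{-n(1-s/t)}.
\]
Since $t>s$, these coefficients are summable, and the base term $f_0$ contributes $\lesssim \|f\|_\infty$; combining these bounds yields $\|f\|_{R^t(J;X)}\lesssim \|f\|_{V^s(J;X)}$. The main obstacle here is the careful construction of the refining partitions $\mathcal{P}_n$ together with the counting estimate $\#\mathcal{P}_n\lesssim 2^{ns}$; everything else is bookkeeping on a geometric series.
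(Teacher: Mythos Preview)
The paper does not supply its own proof of this lemma; it simply cites \cite[Lemma 4.3]{alv2} and \cite[Lemma 2]{coifman}. Your argument is correct and is essentially the classical one from those references: (iii) is an immediate telescoping estimate, (i) follows from the triangle inequality once you have the uniform bound $\|a\|_{V^s}\le 2$ for atoms (your counting ``each $c_I$ is charged at most twice'' is exactly the point), and (ii) is the greedy multi-scale decomposition of Coifman--Rubio de Francia--Semmes, where the key step is indeed the counting bound $\#\mathcal P_n\lesssim 2^{ns}$ coming from $[f]_{V^s}\le 1$.

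One small remark on (ii): the refinement condition $\mathcal P_{n+1}\supseteq \mathcal P_n$ is not automatic from the independent greedy constructions, but as you implicitly note it is not really needed---you may take $f_n$ piecewise constant on $\mathcal P_n$ without nesting, and then $f_{n+1}-f_n$ is a step function on the common refinement $\mathcal P_n\vee\mathcal P_{n+1}$, which has at most $\#\mathcal P_n+\#\mathcal P_{n+1}\lesssim 2^{(n+1)s}$ pieces, each of size $\le \|f-f_n\|_\infty+\|f-f_{n+1}\|_\infty\lesssim 2^{-n}$. Alternatively, restart the level-$(n+1)$ greedy process at every breakpoint of $\mathcal P_n$; this costs at most $\#\mathcal P_n\lesssim 2^{ns}$ extra intervals and preserves the counting. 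Either way the geometric series in $2^{-n(1-s/t)}$ converges and the argument goes through.
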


\begin{comment}
  \begin{proof}
  The embedding results in \ref{it:embed1}-\ref{it:embed3} can be found in  \cite[Lemma 4.3]{alv2} and \cite[Lemma 2]{coifman}. For \ref{it:embed4} by scaling we may assume without loss of generality that  $J=[0,1]$. Noting that $f(t) = f(0)+\int_0^t f'(s)\dd s$, it suffices to show that $T\colon L^1(J;X) \to R^1(J;X)$ given by $Tg(t) := \int_0^t g(s)\dd s$ is a bounded operator. By linearity and density it suffices to consider $g = x \cdot \one_{(a,b]}$ for $x \in X$ and $0\leq a<b \leq 1$. Since
  $$
  \nrm{Tg}_{R^1(J;X)} \leq \int_0^t \nrm{g}_{R^1(J;X)} \leq 1
  $$

  Since $Tg(t) = x \cdot g_{a,b}(t)$ with $$h_{a,b}(t):=\int_0^t \one_{(s,b]}\dd s =\min\cbrace{(t-a)_+,b-a}, \qquad t \in [0,1],$$ the boundedness of $T$ follows by approximating
  $$
  h_{a,b}(t) = (b-a)\cdot \lim_{n \to \infty} \tfrac{1}{n}\cdot \sum_{k=1}^n \one_{(a+\frac{b-a}{n},b]}(t), \qquad t \in [0,1].
  $$
  and noting that the right-hand side is a uniformly converging sequence of $R^1$-atoms.
\end{proof}
\end{comment}

The next lemma gives an interpolation inclusion for the $V^s$ spaces. A result in the converse direction for both $R^s$ and $V^s$ can be found in \cite{alv2}.
\begin{lemma}\label{lemma:interpolation}
Let $X_0,X_1$ be Banach spaces and $J \subseteq\R$. Let $\theta\in[0,1]$, $s_0,s_1\geq 1$ and set $\frac{1}{s_{\theta}}=\frac{1-\theta}{s_0}+\frac{\theta}{s_1}$. Then
$$[V^{s_0}(J;X_0),V^{s_1}(J;X_1)]_{\theta}\hookrightarrow V^{s_{\theta}}(J;[X_0,X_1]_{\theta}).$$
\end{lemma}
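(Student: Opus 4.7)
The strategy is to realize both the $L^\infty$-part and the seminorm part of $V^{s_\theta}(J;X_\theta)$ (with $X_\theta:=[X_0,X_1]_\theta$) as norms arising from families of bounded linear operators, and then invoke interpolation of operators on the complex interpolation couple $(V^{s_0}(J;X_0),V^{s_1}(J;X_1))$. First, I would verify that the couple is interpolation-compatible: since $V^{s_j}(J;X_j)\hookrightarrow L^\infty(J;X_j)\hookrightarrow L^\infty(J;X_0+X_1)$, both spaces continuously embed in the common Hausdorff space $L^\infty(J;X_0+X_1)$, so the complex interpolation space is well defined as a subspace of $L^\infty(J;X_0+X_1)$, and in particular elements are (equivalence classes of) functions $J\to X_0+X_1$.

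For the $L^\infty$-bound, I would consider, for each $t\in J$, the evaluation map $E_t\colon g\mapsto g(t)$. It is bounded from $V^{s_j}(J;X_j)$ to $X_j$ with norm $\leq 1$ for $j=0,1$ by the very definition \eqref{Vnorm}. Complex interpolation of operators (see \cite[Theorem 4.1.2]{BL}) then gives $E_t\colon [V^{s_0}(J;X_0),V^{s_1}(J;X_1)]_\theta\to X_\theta$ with norm $\leq 1$, hence $\|f\|_{L^\infty(J;X_\theta)}\leq \|f\|_{[V^{s_0}(J;X_0),V^{s_1}(J;X_1)]_\theta}$ for every $f$ in the interpolation space.

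For the seminorm part, fix any finite increasing tuple $P=(t_0<\dots<t_N)$ in $J$ and consider the difference operator $D_P\colon g\mapsto (g(t_{i+1})-g(t_i))_{i=0}^{N-1}$. From \eqref{7.30} it is clear that $D_P\colon V^{s_j}(J;X_j)\to \ell^{s_j}_N(X_j)$ has norm $\leq 1$, where $\ell^{s}_N(X)$ denotes $X^N$ equipped with the $\ell^s$-norm. Interpolation of operators then yields
\[
D_P\colon [V^{s_0}(J;X_0),V^{s_1}(J;X_1)]_\theta\longrightarrow [\ell^{s_0}_N(X_0),\ell^{s_1}_N(X_1)]_\theta
\]
with norm $\leq 1$. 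Now I would invoke the standard identification $[\ell^{s_0}_N(X_0),\ell^{s_1}_N(X_1)]_\theta=\ell^{s_\theta}_N(X_\theta)$ with equal norms (this is the vector-valued version of \cite[Theorem 5.1.2]{BL}, which in the present finite-dimensional setting reduces to iterated interpolation of finite direct sums). Combining these statements,
\[
\Bigl(\sum_{i=0}^{N-1}\|f(t_{i+1})-f(t_i)\|_{X_\theta}^{s_\theta}\Bigr)^{1/s_\theta}\leq \|f\|_{[V^{s_0}(J;X_0),V^{s_1}(J;X_1)]_\theta}.
\]
Taking the supremum over all finite tuples $P$ bounds $[f]_{V^{s_\theta}(J;X_\theta)}$, and together with the $L^\infty$-bound obtained above this yields $\|f\|_{V^{s_\theta}(J;X_\theta)}\leq 2\|f\|_{[V^{s_0}(J;X_0),V^{s_1}(J;X_1)]_\theta}$, proving the desired continuous inclusion.

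The main conceptual point is the interpolation-compatibility check and the correct ambient space, together with the (standard but crucial) identification of the interpolation space of vector-valued sequence spaces; I expect no serious technical obstacle beyond these. The endpoint cases $\theta\in\{0,1\}$ or $s_0=s_1$ are trivial by monotonicity of the $\ell^r(V^s)$ scale, so the argument above is really needed only for the genuinely interpolated case.
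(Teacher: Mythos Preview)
Your proof is correct and essentially the same as the paper's. The only cosmetic difference is that the paper packages the $L^\infty$-part and the seminorm-part into a single family of operators $\Phi_P f:=(f(t_0),f(t_1)-f(t_0),\ldots,f(t_N)-f(t_{N-1}))$ taking values in $\ell^s(X)$, and then observes $\|f\|_{V^s}\eqsim_s\sup_P\|\Phi_P f\|_{\ell^s(X)}$, whereas you treat the evaluation maps $E_t$ and the difference maps $D_P$ separately; both arguments rest on the same interpolation identity for vector-valued $\ell^s$-spaces.
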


\begin{proof}
Let $X$ be a Banach space and $s\in[1,\infty]$. Let $P:= (t_0,  \cdots, t_{N})$ with $t_0,\ldots,t_N \in J$ such that $t_0<\cdots<t_N$.
Define the operator $\Phi_P:V^s(J;X)\to\ell^s(X)$ as
$$\Phi_Pf:=\left(f(t_0),f(t_{1})-f(t_0),\cdots,f(t_{N})-f(t_{N-1})\right).$$
Then $$\left\|\Phi_Pf\right\|_{\ell^s(X)}= \has{\left\|f(t_0)\right\|^s_{X} + \sum_{i=0}^{N-1}\left\|f(t_{i+1})-f(t_i)\right\|^s_{X}}^{\frac{1}{s}},$$
so taking the supremum over all such $P$, we have the following equivalence for the norm \eqref{Vnorm} of $V^s(J;X)$:
\begin{align}\label{3.6}
\left\|f\right\|_{V^s(J;X)}\eqsim_s \sup_P\left\|\Phi_Pf\right\|_{\ell^s(X)}.
\end{align}
Now note that $\Phi_P$ is a bounded linear operator from $V^{s_0}(J;X_0)$ to $\ell^{s_0}(X_0)$ and from $V^{s_1}(J;X_1)$ to $\ell^{s_1}(X_1)$. Therefore, by complex interpolation and \cite[Theorem 2.2.6]{HNVW1},
$\Phi_P$ is a bounded linear operator from $[V^{s_0}(J;X_0),V^{s_1}(J;X_1)]_{\theta}$ to $$[\ell^{s_0}(X_0),\ell^{s_1}(X_1)]_{\theta}=\ell^{s_{\theta}}([X_0,X_1]_{\theta}).$$
This combined with \eqref{3.6} yields
$$ \left\|f\right\|_{V^{s_{\theta}}(J;[X_0,X_1]_{\theta})}\lesssim\sup_P\left\|\Phi_Pf\right\|_{\ell^{s_{\theta}}([X_0,X_1]_{\theta})} \lesssim\left\|f\right\|_{[V^{s_0}(J;X_0),V^{s_1}(J;X_1)]_{\theta}},$$
finishing the proof.
\end{proof}

\subsection{Besov spaces and Triebel--Lizorkin spaces}\label{ss:BTL}
Below we recall the definitions of the vector-valued Besov and Triebel--Lizorkin spaces with $A_\infty$-weights.
 For further details on Besov and Triebel--Lizorkin spaces, the reader is referred to  \cite{Tri83, Tr1}.  For the unweighted case in the vector-valued setting, see \cite{AmannvolII,HNVW3,SchmSiunpublished,TriebelSpectra}. The weighted scalar-valued case was studied extensively in \cite{Bui82}, while the weighted vector-valued case can be found in \cite{MeyVer1}.

Fix $\varphi\in \Schw(\R)$ such that the Fourier transform $\wh{\varphi}$ of $\varphi$ satisfies
\[
 0\leq \wh{\varphi}(\xi)\leq 1, \quad  \xi\in \R, \qquad  \wh{\varphi}(\xi) = 1 \ \text{ if } \ |\xi|\leq 1, \qquad  \wh{\varphi}(\xi)=0 \ \text{ if } \ |\xi|\geq \frac32.
\]
Let $(\varphi_k)_{k\geq 0} \subseteq \Schw(\R)$ be defined by
\[
\wh{\varphi}_0 = \wh{\varphi}, \qquad \wh{\varphi}_1 = \wh{\varphi}(\cdot/2) - \wh{\varphi}, \qquad \wh{\varphi}_k = \wh{\varphi}_1(2^{-k+1} \cdot), \quad k\geq 2.
\]
Given $p \in (1,\infty)$, $q\in [1,\infty]$, $s\in \R$ and $w\in A_\infty$,  for $f\in {\mathscr S}'(\R;X)$ we set
\[ \|f\|_{B_{p,q}^s (\R,w;X)} = \Big\| \big( 2^{ks}\varphi_k* f\big)_{k\geq 0} \Big\|_{\ell^q(L^p(\R,w;X))},\]
\[ \|f\|_{F_{p,q}^s (\R,w;X)} = \Big\| \big( 2^{ks}\varphi_k* f\big)_{k\ge 0} \Big\|_{L^p(\R,w;\ell^q(X))}. \]
The spaces of tempered distributions for which the above norms are finite are denoted by $B_{p,q}^s (\R,w;X)$ and the $F_{p,q}^s (\R,w;X)$, respectively, which are Banach spaces. Any other $\wt{\varphi}$ satisfying the above leads to an equivalent norm.

\section{\texorpdfstring{$\mathcal{R}$}{R}-boundedness}\label{sec:m-R}
In this section, we briefly introduce the basic properties of $\mathcal{R}$-boundedness, which play a key role in our main result. For details we refer to \cite[Chapter 8]{HNVW2}.

\begin{definition}\label{5.9}
Let $X$ and $Y$ be Banach spaces and $(\varepsilon_n)_{n\ge 1}$ be a Rademacher
sequence. We say that a family of operators  $\mathcal{T}\subseteq\mathcal{L}(X,Y)$
 is \emph{$\mathcal{R}$-bounded} if there exists a constant $C> 0$ such that for all finite sequences $(T_n)_{n=1}^N$ in $\mathcal{T}$ and $(x_n)_{n=1}^N$ in $X$, we have
$$\E\Big\|\sum_{n=1}^N\varepsilon_nT_nx_n\Big\|_Y\leq C\, \E\Big\|\sum_{n=1}^N\varepsilon_nx_n\Big\|_X.$$
The least admissible constant in the inequality is called the $\mathcal{R}$-bound, denoted by $\mathcal{R}(\mathcal{T})$.
\end{definition}
In the above one can replace the $L^1$-moment with any $L^p$-moment for  $p\in [1, \infty)$ by the Kahane-Khintchine inequalities. By \cite[Propositions 8.1.21 and 8.1.22]{HNVW2}, if $\mathcal{T}$ is $\mathcal{R}$-bounded in $\mathcal{L}(X,Y)$, then the closure of the convex hull of $\mathcal{T}$, denoted as $\overline{\conv}(\mathcal{T})$, is also $\mathcal{R}$-bounded with
\begin{align}\label{5.27}
\mathcal{R}(\mathcal{T})=\mathcal{R}(\overline{\conv}(\mathcal{T})).
\end{align}

In the next result, we show that for any interval $J\subseteq\R$, functions in $R^{r}(J;\mathcal{L}(X,Y))$ and $V^{r}(J;\mathcal{L}(X,Y))$  have $\mathcal{R}$-bounded range under suitable conditions on $X$ and $Y$.
\begin{proposition}\label{R-bdd of R}
Let $X$, $Y$ be Banach spaces and $\mathcal{J}$ be a collection of  disjoint intervals in $\R$. Suppose that $X$ has cotype $q\in[2,\infty]$ and $Y$ has type $t\in[1,2]$. Let $\frac1r := \frac1t-\frac1q$. Then every $f\in \ell^{r}(R^{r}(\mathcal{J};\mathcal{L}(X,Y)))$ has $\mathcal{R}$-bounded range with
\[\mathcal{R}(\Ran(f)) \lesssim_{X,Y,q,t}  \|f\|_{\ell^{r}(R^{r}(\mathcal{J};\mathcal{L}(X,Y)))}.\]
Moreover, for $s \in [1,r)$ we have that every $f\in \ell^r(V^s(\mathcal{J};\mathcal{L}(X,Y)))$ has $\mathcal{R}$-bounded range with
\begin{align*}
	\mathcal{R}(\Ran(f))&\lesssim_{X,Y,q,s,t} \|f\|_{\ell^r(V^{s}(\mathcal{J};\mathcal{L}(X,Y)))},
\end{align*}
\end{proposition}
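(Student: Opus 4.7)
The second part reduces to the first via Lemma~\ref{lemma:5.17}(ii), which gives $V^{s}(J;\mathcal{L}(X,Y))\hookrightarrow R^{r}(J;\mathcal{L}(X,Y))$ for $s<r$ and hence $\ell^{r}(V^{s}(\mathcal{J}))\hookrightarrow\ell^{r}(R^{r}(\mathcal{J}))$. So the plan focuses on the $R^{r}$-statement.

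The first step is a key factorization lemma: for any sequence $(T_n)\subseteq\mathcal{L}(X,Y)$,
$$\mathcal{R}(\{T_n\})\lesssim_{X,Y,q,t}\Bigl(\sum_n\|T_n\|^{r}\Bigr)^{1/r}.$$
This will follow by factoring $\mathrm{Rad}(X)\hookrightarrow\ell^{q}(X)\xrightarrow{(x_n)\mapsto(T_nx_n)}\ell^{t}(Y)\hookrightarrow\mathrm{Rad}(Y)$, where the outer inclusions come from cotype $q$ of $X$ and type $t$ of $Y$, and the middle diagonal operator is bounded with norm at most $(\sum_n\|T_n\|^{r})^{1/r}$ by H\"older with $\tfrac{1}{t}=\tfrac{1}{r}+\tfrac{1}{q}$. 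Finite sequences with repeats reduce to distinct ones by a symmetrization: the repeats collapse into independent symmetric sums, and re-randomization by auxiliary Rademachers yields, in distribution, a fresh independent Rademacher sequence.

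Applied to an $R^{r}$-atom $a=\sum_{I}c_{I}\one_{I}$ with range $\{c_I\}\cup\{0\}$, this immediately gives $\mathcal{R}(\Ran(a))\lesssim 1$. For a general $f\in R^{r}(J;\mathcal{L}(X,Y))$ with atom decomposition $f=\sum_k\lambda_k a_k$ satisfying $\sum_k|\lambda_k|\leq 2\|f\|_{R^{r}(J)}$, the inclusion $\Ran(f)\subseteq\sum_k\lambda_k(\Ran(a_k)\cup\{0\})$ (Minkowski sum), together with $\mathcal{R}(\alpha\mathcal{T})=|\alpha|\mathcal{R}(\mathcal{T})$ and $\mathcal{R}(\mathcal{T}_1+\mathcal{T}_2)\leq\mathcal{R}(\mathcal{T}_1)+\mathcal{R}(\mathcal{T}_2)$, produces the single-interval bound $\mathcal{R}(\Ran(f))\lesssim\|f\|_{R^{r}(J;\mathcal{L}(X,Y))}$.

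The principal obstacle is lifting this single-interval estimate to an $\ell^{r}$-sum across $\mathcal{J}$, since a na\"ive triangle inequality on $\Ran(f)=\bigcup_{J\in\mathcal{J}}\Ran(f|_J)$ only yields an $\ell^{1}$-control. The plan is to exploit the disjointness of the intervals: given a finite subfamily $\mathcal{J}'\subseteq\mathcal{J}$, a finite sequence $(U_m)\subseteq\Ran(f)$ with $U_m\in\Ran(f|_{J_m})$, and vectors $(x_m)\subseteq X$, introduce the partial Rademacher sums $Z_{J}:=\sum_{m:\,J_m=J}\varepsilon_m U_m x_m$ and $S_{J}:=\sum_{m:\,J_m=J}\varepsilon_m x_m$. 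Since distinct $J$'s involve disjoint sets of Rademacher indices, the $Z_J$'s are \emph{independent} symmetric $Y$-valued random variables; resampling by auxiliary Rademachers together with type $t$ of $Y$ gives $\mathbb{E}\|\sum_J Z_J\|_Y^{t}\lesssim\sum_J\mathbb{E}\|Z_J\|_Y^{t}$. The $t$-moment lift of the single-interval bound via Kahane--Khintchine yields $\mathbb{E}\|Z_J\|_Y^{t}\lesssim\|f|_J\|_{R^{r}}^{t}\,\mathbb{E}\|S_J\|_X^{t}$, and H\"older with $\tfrac{1}{t}=\tfrac{1}{r}+\tfrac{1}{q}$ separates these into $\|f\|_{\ell^{r}(R^{r}(\mathcal{J}))}$ and $(\sum_J\mathbb{E}\|S_J\|_X^{q})^{1/q}$ (after another Kahane substitution). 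The subtlest point, which I expect to be the main technical challenge, is bounding the latter factor by $\mathbb{E}\|\sum_m\varepsilon_m x_m\|_X$: applying Kahane in $\ell^{q}(X)$ to the Rademacher sum $(S_J)_J$ converts it to $\mathbb{E}(\sum_J\|S_J\|_X^{q})^{1/q}$; cotype $q$ of $X$ applied pathwise dominates this by $\mathbb{E}\|\sum_m\varepsilon'_{J(m)}\varepsilon_m x_m\|_X$; and a direct moment computation shows that $(\varepsilon'_{J(m)}\varepsilon_m)_m$ forms an i.i.d.\ Rademacher sequence, so the last quantity equals $\mathbb{E}\|\sum_m\varepsilon_m x_m\|_X$.
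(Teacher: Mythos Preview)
Your proposal is correct and follows the same skeleton as the paper: reduce $V^s$ to $R^r$ via Lemma~\ref{lemma:5.17}, bound atoms, extend to $f|_J$ by the triangle inequality for $\mathcal{R}$-bounds, and finally pass to the union over $\mathcal{J}$.

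The difference is one of economy. Your ``key factorization lemma'' $\mathcal{R}(\{T_n\})\lesssim(\sum_n\|T_n\|^r)^{1/r}$ is precisely \cite[Proposition~8.1.20]{HNVW2} (due to \cite{vanGaans}), and the paper simply cites it. More importantly, the same reference already contains the generalization to unions of families,
\[
\mathcal{R}\Big(\bigcup_{J\in\mathcal{J}}\mathcal{T}_J\Big)\lesssim_{X,Y,q,t}\Big(\sum_{J\in\mathcal{J}}\mathcal{R}(\mathcal{T}_J)^r\Big)^{1/r},
\]
which the paper invokes a second time to dispatch in one line what you call ``the principal obstacle''. Your elaborate argument with partial sums $Z_J$, independence, re-randomization, type/cotype, H\"older and the distributional identity for $(\varepsilon'_{J(m)}\varepsilon_m)_m$ is a correct direct proof of this very statement, but it duplicates the mechanism you already used for the singleton case. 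The proof that $\mathcal{R}(\bigcup_J\mathcal{T}_J)\lesssim(\sum_J\mathcal{R}(\mathcal{T}_J)^r)^{1/r}$ is structurally identical to the singleton version: factor through $\mathrm{Rad}(X)\hookrightarrow\ell^q(X)\to\ell^t(Y)\hookrightarrow\mathrm{Rad}(Y)$, grouping indices by $J$ and using the $\mathcal{R}$-bound of $\mathcal{T}_J$ in the middle. Recognizing this would have spared you the last paragraph. The paper also singles out the case $r=\infty$ (i.e.\ $t=q=2$) and appeals to \cite[Theorem~8.1.3]{HNVW2}; your factorization argument covers it as well, though you do not mention it explicitly.
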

\begin{proof}
If $r=\infty$, then $t=q=2$, so every uniformly bounded family is $\mathcal{R}$-bounded (see \cite[Theorem 8.1.3]{HNVW2}). Now suppose that $r\in [1, \infty)$.  Let $J\in \mathcal{J}$ and $\mathcal{I}$ be a family of mutually disjoint subintervals of $J$. For $a\in R^r_{\mathrm{at}}(J;\mathcal{L}(X,Y))$ write $a=\sum_{I\in \mathcal{I}}c_I\one_I$, where
 $\{c_I\}_{I\in \mathcal{I}}\subseteq \mathcal{L}(X,Y)$. By \cite{vanGaans} (see also \cite[Proposition 8.1.20]{HNVW2}) we have
\begin{align*}
	\mathcal{R}(\Ran(a))=\mathcal{R}(\{c_{I}:{I\in \mathcal{I}}\})\lesssim \has{\sum_{I\in\mathcal{I}}\|c_I\|^r}^{1/r}\leq 1.
\end{align*}
Now take $f\in \ell^{r}(R^{r}(\mathcal{J};\mathcal{L}(X,Y)))$. Then $f|_J\in R^r(J;\mathcal{L}(X,Y))$ and thus
there exist sequences $(\lambda_k)_{k\ge 1}\subseteq\mathbb{C}$ and $(a_k)_{k\ge 1}\subseteq R^r_{\mathrm{at}}(J;\mathcal{L}(X,Y))$ such that $f|_J=\sum_{k=1}^\infty\lambda_ka_k$. From the triangle inequality one sees that
\begin{align*}
	\mathcal{R}(\Ran(f|_J))\leq \sum_{k=1}^\infty|\lambda_k |\cdot\mathcal{R}(\Ran(a_k))\lesssim \sum_{k= 1}^\infty|\lambda_k|.
\end{align*}
and hence, taking the infimum over $(\lambda_k)_{k\ge 1}$ and  $(a_k)_{k\ge 1}$ and employing \cite{vanGaans} once more on $\Ran(f)=\cup_{J\in\mathcal{J}}\Ran(f|_J)$, we obtain
\begin{align*}
	\mathcal{R}(\Ran(f))\lesssim \Big(\sum_{J\in\mathcal{J}}\mathcal{R}\hab{\Ran(f|_J)}^r\Big)^{1/r}\lesssim \|f\|_{ \ell^{r}(R^{r}(\mathcal{J};\mathcal{L}(X,Y)))},
\end{align*}
finishing the proof for $R^r$. The result for $V^s$ now follows by Lemma \ref{lemma:5.17}.
\end{proof}

Note that the result also holds if $\mathcal{J}$ consists of only one interval. Moreover, that interval could be $\R$, in which  case one obtains that functions in $V^{s}(\R;\mathcal{L}(X,Y))$ for $s \in [1,r)$ have $\mathcal{R}$-bounded range.

In \cite{HytVer} sufficient conditions for $\mathcal{R}$-boundedness are given in terms of Besov and H\"older regularity under the assumption that $X$ has cotype $q$ and $Y$ has type $t$. The space used there is for instance $B^{d/r}_{r,1}(\R^d;\calL(X,Y))$ and the latter embeds into the bounded continuous functions with values $\calL(X,Y)$.   Since the functions in $R^r$ and $V^{s}$ are not necessarily continuous, Proposition \ref{R-bdd of R} does not follow from \cite{HytVer}.

In the converse direction for $d=1$, this raises the natural question whether $B^{1/r}_{r,1}(\R;E)$ embeds into $R^r(\R;E)$, which we will show in Lemma \ref{lem:BesovV} below. As a consequence, the result of Proposition \ref{R-bdd of R} implies the result in \cite{HytVer} for $d=1$. Our proof can be  extended to $d\geq 2$, which we leave to the interested reader.

\begin{lemma}\label{lem:BesovV}
Let $E$ be a Banach space and let $r\in [1, \infty)$. Then for $r>1$ we have
\[B^{1/r}_{r,1}(\R;E)\hookrightarrow R^{r}(\R;E)\cap L^r(\R;E)\hookrightarrow V^{r}(\R;E)\cap L^r(\R;E)\hookrightarrow B^{1/r}_{r,\infty}(\R;E)\]
and for $r=1$ we have
\[B^{1}_{1,1}(\R;E)\hookrightarrow V^{1}(\R;E)\cap L^1(\R;E)\hookrightarrow B^{1}_{1,\infty}(\R;E)\]
\end{lemma}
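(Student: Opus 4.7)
The proof splits into three embeddings for $r>1$ (the middle $R^r\cap L^r\hookrightarrow V^r\cap L^r$ is Lemma \ref{lemma:5.17}(i)) and two for $r=1$. For the embedding into $B^{1/r}_{r,\infty}$, the key estimate is $\|f(\cdot+h)-f(\cdot)\|_{L^r}\le h^{1/r}[f]_{V^r}$, which I would derive from the Fubini identity
\[\int_\R \|f(x+h)-f(x)\|_E^r\dd x=\int_0^h\sum_{j\in\Z}\|f(y+(j+1)h)-f(y+jh)\|_E^r\dd y\le h\cdot[f]_{V^r}^r,\]
since for each fixed $y$ the inner sum is a $V^r$ variation sum along the $h$-translates of $y$. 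Plugging this into the modulus-of-continuity characterization of $B^{1/r}_{r,\infty}$ (first-order differences for $r>1$, where $1/r<1$; second-order differences for $r=1$, trivially bounded by $2\omega_1$) yields the embedding.

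\textbf{Embedding from $B^1_{1,1}$ when $r=1$.} Here I would use the chain $B^1_{1,1}\hookrightarrow W^{1,1}\hookrightarrow V^1$. Vector-valued Bernstein gives $\|(\varphi_k*f)'\|_{L^1}\lesssim 2^k\|\varphi_k*f\|_{L^1}$, hence $\|f'\|_{L^1}\lesssim\|f\|_{B^1_{1,1}}$, and for any Bochner--Sobolev function one has $[f]_{V^1}\le\|f'\|_{L^1}$; the embedding $B^1_{1,1}\hookrightarrow L^1$ is standard.

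\textbf{Embedding from $B^{1/r}_{r,1}$ when $r>1$.} To show $B^{1/r}_{r,1}\hookrightarrow R^r$, I would establish the Littlewood--Paley block estimate $\|g_k\|_{R^r}\lesssim 2^{k/r}\|g_k\|_{L^r}$ with $g_k:=\varphi_k*f$, and sum over $k$ (the continuous embedding $R^r\hookrightarrow L^\infty$ identifies the $R^r$-sum with $f=\sum_k g_k$). Letting $\mathcal{I}_n$ be the partition of $\R$ into intervals of length $2^{-n}$ with left endpoints $x_I$, and $S_n g:=\sum_{I\in\mathcal{I}_n}g(x_I)\one_I$, the uniformly convergent telescoping
\[g_k=S_k g_k+\sum_{n\ge k}(S_{n+1}g_k-S_n g_k)\]
exhibits $g_k$ as an $\ell^1$-combination of dyadic step functions. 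A reproducing-kernel identity $g_k=\psi_k*g_k$ with $\psi_k(\cdot)=2^k\psi(2^k\cdot)$ (where $\widehat\psi\equiv 1$ on $\supp\widehat\varphi_1$), combined with Jensen's inequality and the Schwartz decay of $\psi$, yields the vector-valued Plancherel--P\'olya estimate $\sum_{I\in\mathcal{I}_n}\|g_k(x_I)\|_E^r\lesssim 2^n\|g_k\|_{L^r}^r$ for all $n\ge k$. At $n=k$ this controls $\|S_k g_k\|_{R^r}\lesssim 2^{k/r}\|g_k\|_{L^r}$. The telescoping difference $S_{n+1}g_k-S_n g_k$ is supported on the right halves of the intervals of $\mathcal{I}_n$ with coefficients $g_k(x_I+2^{-n-1})-g_k(x_I)$; applying the same estimate to the band-limited function $g_k(\cdot+2^{-n-1})-g_k(\cdot)$ together with the Bernstein bound $\|g_k(\cdot+2^{-n-1})-g_k(\cdot)\|_{L^r}\lesssim 2^{k-n}\|g_k\|_{L^r}$ gives $\|S_{n+1}g_k-S_n g_k\|_{R^r}\lesssim 2^{n/r-n+k}\|g_k\|_{L^r}$. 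Summing over $n\ge k$ yields a geometric series with ratio $2^{-(1-1/r)}<1$, totaling $\lesssim 2^{k/r}\|g_k\|_{L^r}$.

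The main obstacle is the vector-valued Plancherel--P\'olya estimate and the bookkeeping between the sampling scale $2^{-n}$ and the spectral scale $2^k$, which requires $n\ge k$ and forces the geometric sum to converge only when $r>1$; the remaining vector-valued analogues of Bernstein, Jensen and the reproducing-kernel identity are routine.
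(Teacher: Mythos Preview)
Your proof is correct, but the approach to the first embedding differs from the paper's.

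\medskip

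\textbf{The last embedding.} Your argument via the Fubini identity $\int_\R\|f(x+h)-f(x)\|_E^r\dd x=\int_0^h\sum_{j\in\Z}\|f(y+(j+1)h)-f(y+jh)\|_E^r\dd y\le h[f]_{V^r}^r$ is exactly what the paper does. Your extra remark about second-order differences at $r=1$ is a harmless refinement.

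\medskip

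\textbf{The first embedding.} Here the approaches diverge. The paper invokes the (vector-valued) atomic decomposition of $B^{1/r}_{r,1}$: one writes $f=\sum_{j\ge 0}\sum_{k\in\Z}\mu_{j,k}a_{j,k}$ with atoms $a_{j,k}$ supported on intervals of length $\eqsim 2^{-j}$, satisfying $\|a_{j,k}\|_{L^\infty}\le 1$ and $\|a_{j,k}'\|_{L^\infty}\le 2^j$, and with $\|f\|_{B^{1/r}_{r,1}}\eqsim\sum_j(\sum_k|\mu_{j,k}|^r)^{1/r}$. Since for fixed $j$ the supports have bounded overlap and $\|f_1+f_2\|_{R^r}^r\le\|f_1\|_{R^r}^r+\|f_2\|_{R^r}^r$ for disjointly supported functions, it suffices to bound $\|a_{j,k}\|_{R^r}\lesssim 1$; this follows from Lemma~\ref{lemma:5.17} via $R^r\supset V^1$ and $\|a_{j,k}\|_{V^1}\le\|a_{j,k}\|_{L^\infty}+|\supp a_{j,k}|\cdot\|a_{j,k}'\|_{L^\infty}\lesssim 1$. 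The $r=1$ case is handled identically, replacing $R^r$ by $V^1$.

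Your route instead works directly with the Littlewood--Paley pieces $g_k=\varphi_k*f$ and builds an $R^r$-decomposition of each $g_k$ by hand through dyadic sampling, Plancherel--P\'olya, and Bernstein; for $r=1$ you pass through $W^{1,1}$. Both arguments are valid. The paper's approach is shorter and more conceptual, importing the heavy lifting from the Besov atomic decomposition; your approach is self-contained (no external atomic theorem needed) and makes the role of the constraint $r>1$ transparent through the geometric series with ratio $2^{-(1-1/r)}$. Your chain $B^1_{1,1}\hookrightarrow W^{1,1}\hookrightarrow V^1$ is arguably cleaner than reusing the atomic decomposition for that case.
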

Note that one does not have $B^{1}_{1,1}(\R)\hookrightarrow R^1(\R)$. Indeed, functions in $R^1(\R)$ can only take countably many values.  

\begin{proof} Since trivially $B^{1/r}_{r,1}(\R;E)\hookrightarrow L^r(\R;E)$, for the first embedding in the case $r>1$ we only need to show $B^{1/r}_{r,1}(\R;E) \hookrightarrow  R^{r}(\R;E)$. We will use the atomic decomposition of Besov spaces (see \cite[Section 3]{SSS12}). Indeed, any $f \in B^{1/r}_{r,1}(\R;E)$ can be written as
\begin{equation}\label{eq:BesovAtomic}
    f = \sum_{j=0}^\infty \sum_{k \in \Z} \mu_{j,k} a_{j,k},
\end{equation}
where the $\mu_{j,k}$ satisfy
\[
    \|f\|_{B^{1/r}_{r,1}(\R;E)} \eqsim_r \sum_{j=0}^\infty \Big( \sum_{k \in \Z} |\mu_{j,k}|^r \Big)^{1/r},
\]
and the atoms $a_{j,k}$ are supported on a fixed dilate of the interval around $k$ with length $2^{-j}$. Furthermore, they satisfy $\nrm{a_{j,k}}_{L^\infty(\R;E)} \leq 1$ and $\nrm{a_{j,k}'}_{L^\infty(\R;E)} \leq 2^j$.
Note that $$\nrm{f+g}_{R^r(\R;E)}^r \leq \nrm{f}_{R^r(\R;E)}^r+\nrm{g}_{R^r(\R;E)}^r$$ for disjointly supported $f,g\colon \R\to E$. Therefore, since for fixed $j\geq 0$ the supports of $a_{j,k}$ have bounded overlap, we can estimate
\begin{align*}
  \|f\|_{R^{r}(\R;E)} \lesssim_r \sum_{j=0}^\infty{\big( \sum_{k \in \Z} \|a_{j,k}\mu_{j,k}\|_{R^r(\R,E)}^r \big)}^{1/r}
\end{align*}
Hence, it remains to show that $\|a_{j,k}\|_{R^r(\R,E)} \lesssim 1$ independent of $j,k$. Denoting the support of $a_{j,k}$ by $I$, this is a direct consequence of (see Lemma \ref{lemma:5.17})
\[\|a_{j,k}\|_{R^r(\R,E)} \lesssim \|a_{j,k}\|_{V^1(\R,E)} \lesssim \|a_{j,k}\|_{L^\infty(I;E)} + |I| [a_{j,k}]_{C^1(I;E)}\lesssim 1.\]
This finishes the proof of $B^{1/r}_{r,1}(\R;E) \hookrightarrow  R^{r}(\R;E)$. The proof of $B^{1}_{1,1}(\R;E)\hookrightarrow V^{1}(\R;E)$ is identical.

The second embedding in the first claim follows from Lemma \ref{lemma:5.17}. To prove the final embedding of both claims, take $r \in [1,\infty)$. Using the well-known difference norm of the Besov space (see \cite[Section 2.5.12]{Tri83}) it suffices to show that for all $h>0$,
\begin{align*}
 \int_{\R} \|f(x+h) - f(x)\|^r \dd x\leq h [f]_{V^r(\R;E)}^r.
\end{align*}
To check this, note that
\begin{align*}
 \int_{\R} \|f(x+h) - f(x)\|^r dx & = \sum_{j\in \Z} \int_{jh}^{(j+1)h} \|f(x+h) - f(x)\|^r \dd x
 \\ & =  \sum_{j\in \Z} \int_{0}^{h} \|f(y+(j+1)h) - f(y+jh)\|^r \dd y
\\ & =  \int_{0}^{h} \sum_{j\in \Z}  \|f(y+(j+1)h) - f(y+jh)\|^r \dd y
\\ & \leq \int_{0}^{h} [f]_{V^r(\R;E)}^r\dd y = h [f]_{V^r(\R;E)}^r,
\end{align*}
finishing the proof of the second embedding.
\end{proof}
Similar results to Lemma \ref{lem:BesovV} for bounded intervals can in the scalar-valued case  be found in \cite{Ros09} with generalizations to the vector-valued setting in \cite{LPT20}. Furthermore, results for  $W^{s,r} = B^{s}_{r,r}$ with $s>1/r$ can be found in \cite[Corollary A.3]{FrizVictoir}.

\begin{remark}
From the fact that the Besov and Triebel--Lizorkin spaces contain unbounded functions (see \cite[Theorem 4.6.4/1]{RS96}) one sees that
\begin{align*}
B^{1/r}_{r,q}(\R)&\not\hookrightarrow V^{r}(\R) \ \ \text{for all $r\in [1, \infty)$ and $q\in (1,\infty]$},\\ 
F^{1/r}_{r,q}(\R)&\not\hookrightarrow V^{r}(\R) \ \ \text{for all $r\in (1,\infty)$ and $q \in[1,\infty]$}.
\end{align*}
This proves the sharpness of the $B^{1/r}_{r,1}$-embeddings in Lemma \ref{lem:BesovV}.

The embeddings into $B^{1/r}_{r,\infty}$ are also sharp. Indeed, one has
\begin{align*}
V^{r}(\R)\cap L^r(\R)\not\hookrightarrow B^{1/r}_{r,q}(\R) \ \ \text{for all $r\in [1, \infty]$ and $q\in [1, \infty)$,}
\\ 
V^{r}(\R)\cap L^r(\R)\not\hookrightarrow F^{1/r}_{r,q}(\R) \ \ \text{for all $r\in [1, \infty)$ and $q\in [1, \infty]$.}
\end{align*}
Indeed, one has that $f = \one_{[0,1]}\in V^{r}(\R)\cap L^r(\R)$, but not in $B^{1/r}_{r,q}(\R)$ for $q<\infty$ (see \cite[Lemma 4.6.3/2]{RS96}). To obtain the result for $F^{1/r}_{r,q}(\R)$, it suffices to consider $q=\infty$ and for this case the argument of \cite[Example 14.6.33]{HNVW3} can be used. \end{remark}

\section{Weighted, vector-valued variational Carleson estimates}\label{sec:m-carleson}
In this section, we prepare for our main result by establishing a weighted version of the vector-valued variational Carleson estimate.  We start by extending the boundedness of the vector-valued Carleson operator from \cite{HL} to the weighted setting.
For $f \in \mathcal{S}(\R;X)$ and $a \in \R$ define
$$
\mathcal{C}_a f(x):= \int_{-\infty}^a \widehat{f}(\xi)e^{2\pi i \xi x}\ud\xi , \quad x \in \R.
$$
The \emph{Carleson maximal operator}  is defined as
$$
\mathcal{C}_{*}f(x):= \sup_{a\in \R} \, \left\|\mathcal{C}_{a}f(x)\right\|_X, \quad  x\in \R.
$$

\begin{proposition}\label{9.4}
Let $\theta \in (0,1]$, let $X$ be a $\theta$-intermediate $\UMD$ Banach space and let $p\in(1,\infty)$. There exists a non-decreasing function $\phi_{X,p}:[1,\infty)\to [1,\infty)$  such that for all $w\in A_p$ and $f\in L^p(\R,w;X)$ we have
\begin{align*}
	\norm{\mathcal{C}_{*}f}_{L^p(\R,w)}\leq \phi_{X,p}([w]_{A_p}) \left\|f\right\|_{L^p(\R,w;X)}.
\end{align*}
\end{proposition}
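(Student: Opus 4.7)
The plan is to follow the two-step template sketched in the introduction (for the variational Carleson operator), now adapted to the Carleson maximal operator $\mathcal{C}_*$: first prove a weighted estimate when $X=Z$ is a UMD Banach space, then use the known weighted bound when $X=H$ is a Hilbert space, and finally interpolate along the decomposition $X=[Z,H]_\theta$.

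\textbf{UMD endpoint via sparse domination.} The unweighted estimate $\|\mathcal{C}_*f\|_{L^p(\R)}\lesssim_{Z,p}\|f\|_{L^p(\R;Z)}$ for UMD $Z$ is the main result of \cite{HL}. To pass to $A_p$-weights, I would first linearize: given $f\in \mathcal{S}(\R;Z)$ choose a measurable selection $a_f\colon \R\to\R$ approximately attaining the supremum in the definition of $\mathcal{C}_*f(x)$, so that $\mathcal{T}f(x):=\mathcal{C}_{a_f(x)}f(x)$ is a linear operator satisfying $\|\mathcal{T}f(x)\|_Z\eqsim \mathcal{C}_*f(x)$. The sparse domination framework of \cite{Lorist21} then applies, provided the weak-type and localization estimates implicit in the proof of \cite{HL} can be verified in the form required there. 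The resulting sparse form, combined with the standard pointwise-to-weighted principle, yields
\[
\|\mathcal{C}_*f\|_{L^p(\R,w)}\leq \phi_{Z,p}([w]_{A_p})\|f\|_{L^p(\R,w;Z)},\qquad w\in A_p,
\]
with $\phi_{Z,p}$ non-decreasing.

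\textbf{Hilbert endpoint and interpolation.} When $X=H$ is a Hilbert space, the corresponding weighted Carleson estimate
\[
\|\mathcal{C}_*f\|_{L^p(\R,w)}\leq \phi_{H,p}([w]_{A_p})\|f\|_{L^p(\R,w;H)}
\]
follows either from the scalar Carleson--Hunt theory combined with an orthogonality argument or, more directly, from the variational bounds of \cite{ALV,DiDoUr}. To combine the two endpoints, I use the standard identification
\[
[L^p(\R,w;Z),L^p(\R,w;H)]_\theta = L^p(\R,w;[Z,H]_\theta) = L^p(\R,w;X),
\]
and interpolate the linear operator $\mathcal{T}$ (with the same exponent and weight on both sides). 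This gives the required bound for $X$, with constant at most $\phi_{Z,p}([w]_{A_p})^{1-\theta}\phi_{H,p}([w]_{A_p})^{\theta}$, which is again a non-decreasing function of $[w]_{A_p}$.

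\textbf{Main obstacle.} The crux is the first step: checking that the proof of \cite{HL} produces the weak-type estimates needed to feed into \cite{Lorist21} with sufficiently explicit constants so that the weight dependence comes out quantitatively (and non-decreasingly) in $[w]_{A_p}$. The linearization and interpolation are routine, but one must take care to preserve measurability of the selection $a_f$ and to apply the interpolation functor to the linear object $\mathcal{T}$ rather than directly to the sublinear operator $\mathcal{C}_*$.
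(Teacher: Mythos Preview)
There is a genuine gap in your first step. The main result of \cite{HL} is \emph{not} a Carleson estimate for arbitrary UMD spaces $Z$; it is precisely for $\theta$-intermediate UMD spaces. Whether the Carleson maximal operator is bounded on $L^p(\R;Z)$ for every UMD space $Z$ is a well-known open problem. So your UMD endpoint simply is not available, and the interpolation scheme collapses.

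Once you notice this, the interpolation becomes unnecessary: since $X$ itself is $\theta$-intermediate UMD, \cite{HL} already gives the unweighted bound on $L^p(\R;X)$ directly. The paper proceeds exactly along these lines. It reduces to a finite supremum over $a\in Q_n$, rewrites $S_{[a,\infty)}$ in terms of the modulated Hilbert transform, and considers the genuinely linear operator $T\colon L^{p_0}(\R;X)\to L^{p_0}(\R;\ell^\infty(Q_n;X))$ given by $Tf(x,a)=\mathcal{H}\mathcal{M}^a f(x)$. The unweighted bound from \cite{HL} gives the $L^{p_0}$-boundedness of $T$, and a direct kernel estimate shows that the grand maximal truncation $M^{\#}_T$ is dominated by the Hardy--Littlewood maximal function. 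This feeds into \cite[Theorem~3.2 and Corollary~1.2]{Lorist21}, yielding the weighted estimate for $p>p_0$ and $w\in A_{p/p_0}$; the self-improvement property of $A_p$-weights (Lemma~\ref{self-improvement}) then allows one to absorb $p_0$ and obtain the claim for all $w\in A_p$ with constant depending only on $[w]_{A_p}$.

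A secondary point: your linearization $\mathcal{T}f(x)=\mathcal{C}_{a_f(x)}f(x)$ depends on $f$, so it is not a fixed linear operator, and complex interpolation of \emph{operators} does not apply to it as stated. The paper's device of mapping into $\ell^\infty(Q_n;X)$ sidesteps this issue entirely.
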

\begin{proof}
To avoid strong measurability issues, let $(q_j)_{j\geq 1}$ be a dense sequence in $\R$, and set $Q_n = \{q_1, \ldots, q_n\}$ for $n\geq 1$. Since $\mathcal{C}_a f=f - S_{[a,\infty)}f$, by density, monotone convergence and continuity, it suffices to prove that there exists a non-decreasing function $\phi_{X,p}\colon[1,\infty)\to [1,\infty)$  such that for all $n\geq 1$, $w\in A_p$ and $f \in \mathcal{S}(\R;X)$,
\begin{align}\label{9.5}
	\nrms{\sup_{a\in Q_n} \|S_{[a,\infty)}f\|_X}_{L^p(\R,w)}\leq  \phi_{X,p}([w]_{A_p}) \left\|f\right\|_{L^p(\R,w;X)}.
\end{align}
Define $\mathcal{M}^af:=x \mapsto e^{-2\pi i x a}f(x)$  and note that
$$\left\|S_{[0,\infty)}\mathcal{M}^af\right\|_X=\left\|\mathcal{M}^aS_{[a,\infty)}f\right\|_X=\left\|S_{[a,\infty)}f\right\|_X.$$
Let $\mathcal{H}$ denote the Hilbert transform. Since $i\mathcal{H}+I=2S_{[0,\infty)}$, it is equivalent to show that there exists a non-decreasing function $\phi_{X,p}:[1,\infty)\to [1,\infty)$  such that for all $n\geq 1$, $w\in A_{p}$ and $f\in \mathcal{S}(\R;X)$,
\begin{align}\label{9.6}
	\nrms{\sup_{a\in Q_n} \left\|\mathcal{H}\mathcal{M}^a f\right\|_X}_{L^p(\R,w)}\leq \phi_{X,p}([w]_{A_p})\left\|f\right\|_{L^p(\R,w;X)}.
\end{align}

Fix $n\geq 1$. To prove \eqref{9.6}, we want to apply the sparse domination result in \cite{Lorist21}. To do so, let $p_0\in (1,\infty)$ and define $T:L^{p_0}(\R;X)\to L^{p_0}(\R;\ell^{\infty}(Q_n;X))$ by
$$Tf(x,a):=\mathcal{H}\mathcal{M}^a f(x),\quad x\in \R, \,a\in Q_n.$$
If $w\equiv 1$, \cite[Theorem 1.1]{HL} shows that \eqref{9.5} holds and therefore \eqref{9.6} also holds, which implies that $T$ is well-defined and bounded by density. For $f\in L^{p_0}(\R;X)$ we furthermore define
$$M^{\#}_T f(x):=\sup_{I\ni x}\esssup_{x_1,x_2\in I}\left\|T(f\one_{\R\backslash 5I})(x_1)-T(f\one_{\R\backslash 5I})(x_2)\right\|_{\ell^{\infty}(Q_n;X)},\quad x\in \R,$$
where the supremum is taken over all intervals $I$ containing $x$.

Fix $x\in \R$ and an interval $I$ containing $x$. Take $x_1, x_2\in I$ and denote the length of $I$ by $\varepsilon$. Note that for $y\in \R\backslash 5I$, $$|x-y|\eqsim|x_1-y|\eqsim|x_2-y| \gtrsim 2\varepsilon.$$ Then by \eqref{Hilbert-transform}, we have for $f \in \mathcal{S}(\R;X)$ that
\begin{align*}
	\nrmb{T(f\one_{\R\backslash 5I})(x_1)&-T(f\one_{\R\backslash 5I})(x_2)}_{\ell^{\infty}(Q_n;X)}\\
	&=\sup_{a\in Q_n}\frac{1}{\pi}\,\nrms{\int_{\R\backslash 5I}\has{\frac{1}{x_1-y}-\frac{1}{x_2-y}}e^{-2\pi i ya}f(y)\ud y}_X \\
	&\lesssim \int_{\R\backslash 5I}\frac{|x_1-x_2|}{|x_1-y||x_2-y|}\left\|f(y)\right\|_X\ud y   \\
	&\lesssim\varepsilon \int_{\R\backslash 5I}\frac{1}{|x-y|^2}\left\|f(y)\right\|_X\ud y\\
	&\lesssim\varepsilon \sum_{k=1}^{\infty}\int_{2^k \varepsilon<|x-y|\leq 2^{k+1}\varepsilon}\frac{1}{|x-y|^2}\left\|f(y)\right\|_X\ud y\\
	&\lesssim \sum_{k=1}^{\infty}\frac{1}{2^k}\frac{2}{2^{k+1}\varepsilon}\int_{|x-y|\leq 2^{k+1}\varepsilon}\left\|f(y)\right\|_X\ud y\\
	&\lesssim\sum_{k=1}^{\infty} \frac{1}{2^k}\cdot M\hab{\left\|f\right\|_X}(x)\lesssim M\hab{\left\|f\right\|_X}(x),
\end{align*}
where  $M$ is the Hardy-Littlewood maximal operator. Therefore we obtain
$$M^{\#}_Tf(x)\lesssim M\ha{\left\|f\right\|_X}(x),\quad x\in \R.$$
Since $M$ is weak $L^1$-bounded by \cite[Theorem 2.3.2]{HNVW1}, $M^{\#}_T$ is weak $L^1$-bounded as well.
We have now checked all assumptions of \cite[Theorem 3.2]{Lorist21} with $p_1=p_0$, $p_2=1$, $r=1$ and $(S,d,\mu)$ is $\R$ with the Euclidean metric and the Lebesgue measure. By \cite[Corollary 1.2]{Lorist21}, we conclude for all $p \in (p_0,\infty)$, $w\in A_{{p}/{p_0}}$ and $f\in {L^p(\R,w;X)}$,
\begin{align}\label{9.221}
	\nrms{\sup_{{a\in Q_n}} \nrmb{ \mathcal{H}\mathcal{M}^a f}_X}_{L^p(\R,w)}\lesssim  {\frac{p^2}{p-p_0}}\cdot C_{X,p_0} \cdot  [w]_{A_{{{p}/{p_0}}}}^{\max\{\frac{1}{p-p_0},1\}} \left\|f\right\|_{L^p(\R,w;X)},
\end{align}
where the factor ${\frac{p^2}{p-p_0}}$ arises from the proof of \cite[Proposition 4.1]{Lorist21} and
	$C_{X,p_0}$   the constant in \eqref{9.6} for the case $w\equiv1$, i.e. the constant from \cite[Theorem 1.1]{HL}. By inspection of the proof, it is clear that $C_{X,p_0}\to \infty$ as $p_0 \downarrow 1$.
	
Now, for fixed $w \in A_p$, let $p_0 \in (1,p)$ be such that $\frac{p}{p_0} = p-\varepsilon$ with $\varepsilon>0$ satisfying the conditions in Lemma \ref{self-improvement}. Combining this with \eqref{9.221}, we conclude that there is a non-decreasing function $\phi_{X,p}\colon[1,\infty)\to [1,\infty)$ such that \eqref{9.6} holds, finishing the proof.
\end{proof}

We will use Proposition \ref{9.4} to extend the boundedness of the vector-valued variational Carleson operator from \cite{amenta2022} to the weighted setting. Note that we only rely on Proposition \ref{9.4} (and thus the main result of \cite{HL}) in the proof. Hence, we also provide an alternative way to prove the unweighted boundedness of the vector-valued variational Carleson operator,  originally proven in \cite{amenta2022}.

For $q \in [1,\infty)$ and $f \in \mathcal{S}(\R;X)$ we define the \emph{variational Carleson operator} as
$$
\mathcal{C}_{*}^{q}f(x):=  \left[a \mapsto \mathcal{C}_{a}f(x)\right]_{V^{q}(\R;X)}=\underset{\mathcal{I}}{\sup}\Big(\underset{I\in \mathcal{I}}{\sum}\|S_{I}f(x)\|_X^{q}\Big)^{\frac{1}{q}}, \quad  x\in \R,
$$
where  the supremum is taken over all finite families of disjoint intervals $\mathcal{I}$ in $\R$. 

\begin{theorem}\label{thm:varcarleson}
Let $X$ be a  $\theta$-intermediate $\UMD$ Banach space for  $\theta\in(0,1]$. Let $q \in (2/{\theta},\infty)$ and $p \in (q',\infty)$. Then there exists a non-decreasing function $\phi_{X,p,q}:[1,\infty)\to [1,\infty)$  such that for all $w\in A_{{p}/{q'}}$ and $f\in L^p(\R,w;X)$,
\begin{align}\label{11.24}
	\left\| \mathcal{C}_{*}^{q}f\right\|_{L^p(\R,w)}\leq \phi_{X,p,q}([w]_{A_{{p}/{q'}}}) \left\|f\right\|_{L^p(\R,w;X)}.
\end{align}
\end{theorem}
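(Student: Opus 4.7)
The plan is to linearize the variational Carleson operator by setting $Uf(x,a) := \mathcal{C}_a f(x)$, so that $U$ is linear and $\|\mathcal{C}_*^q f\|_{L^p(\R,w)} = \|Uf\|_{L^p(\R,w;\dot V^q(\R;X))}$. I will derive \eqref{11.24} by complex interpolation of $U$ between two weighted endpoint bounds: the Carleson-type estimate supplied by Proposition \ref{9.4}, and the Hilbert-valued weighted variational Carleson estimate from \cite{ALV, DiDoUr}.

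Write $X = [X_0, H]_\theta$; when $\theta = 1$ we have $X = H$ and \eqref{11.24} is exactly the cited Hilbert result, so I assume $\theta \in (0, 1)$. For a parameter $\theta' \in (0, \theta)$ to be chosen, set $\eta := (\theta - \theta')/(1 - \theta') \in (0, \theta)$ and $Z := [X_0, H]_{\theta'}$; by the reiteration theorem (\cite[Theorem 4.6.1]{BL}) one has $X = [Z, H]_\eta$, and $Z$ itself is $\theta'$-intermediate UMD. Given $q > 2/\theta$, the plan is to fix $\theta'$ close enough to $0$ so that $q_1 := q\eta > 2$, which is possible because $q\eta \to q\theta > 2$ as $\theta' \to 0^+$. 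Then set $p_0 := p/q'$ and $p_1 := p\,q_1'/q'$. A routine computation using $q_1' = q\eta/(q\eta - 1)$ and $q' = q/(q-1)$ verifies the convexity relation $(1-\eta)/p_0 + \eta/p_1 = 1/p$ together with the crucial identity $p_1/q_1' = p/q' = p_0$, which means that any $w \in A_{p/q'}$ lies automatically in both $A_{p_0}$ and $A_{p_1/q_1'}$.

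With these choices, Proposition \ref{9.4} applied to $Z$ at exponent $p_0$ gives the endpoint bound $U: L^{p_0}(\R, w; Z) \to L^{p_0}(\R, w; L^\infty(\R; Z))$ with constant $\phi_{Z,p_0}([w]_{A_{p_0}})$ (noting that $\|Uf(x,\cdot)\|_{L^\infty(\R;Z)} = \mathcal{C}_* f(x)$ and $V^\infty = L^\infty$), while the Hilbert-valued weighted variational Carleson estimate from \cite{ALV, DiDoUr} at exponent $p_1$ and variation index $q_1 > 2$ yields $U: L^{p_1}(\R, w; H) \to L^{p_1}(\R, w; \dot V^{q_1}(\R; H))$ with constant $\phi_{H, p_1, q_1}([w]_{A_{p_1/q_1'}})$. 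Invoking the Stein--Weiss identification of complex interpolation of weighted Bochner spaces (cf.\ \cite[Theorem 2.2.6]{HNVW1}; with the same weight $w$ on both endpoints one has $[L^{p_0}(w; Y_0), L^{p_1}(w; Y_1)]_\eta = L^p(w; [Y_0, Y_1]_\eta)$), combined with the obvious $\dot V$-seminorm variant of Lemma \ref{lemma:interpolation}, which furnishes $[L^\infty(\R; Z), \dot V^{q_1}(\R; H)]_\eta \hookrightarrow \dot V^q(\R; X)$ since $\eta/q_1 = 1/q$ and $[Z, H]_\eta = X$, one concludes that $U: L^p(\R, w; X) \to L^p(\R, w; \dot V^q(\R; X))$, which is precisely \eqref{11.24}.

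The main obstacle will be carefully tracking the $[w]$-dependence of the constants through the complex interpolation so that the final bound takes the claimed form $\phi_{X,p,q}([w]_{A_{p/q'}})$ with a single non-decreasing function. The $\dot V$-analogue of Lemma \ref{lemma:interpolation} itself is routine, following the same proof but using the seminorm operator $\dot\Phi_P f := (f(t_1) - f(t_0), \ldots, f(t_N) - f(t_{N-1}))$ in place of $\Phi_P$; and the two identities connecting $p_0$, $p_1$, $q_1$ reduce to algebra once $q'$, $q_1'$ are written out.
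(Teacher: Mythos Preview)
Your approach is essentially the same as the paper's: interpolate between Proposition~\ref{9.4} on an intermediate UMD space and the weighted Hilbert-valued variational Carleson bound, with the exponents chosen so that the weight class is $A_{p/q'}$ at both endpoints; your parameters $(\eta,q_1,Z,p_0,p_1)$ match the paper's $(\theta_0,u,X_1,p_1,p_0)$ up to relabelling. The paper handles two technicalities you gloss over: it first reduces to finite index sets $J_n\subset\R$ (to avoid measurability issues with the uncountable supremum defining $\mathcal{C}_*^q$), and it works with the full $V^u$-norm at the Hilbert endpoint by also invoking Proposition~\ref{9.4} there, so that Lemma~\ref{lemma:interpolation} applies directly between genuine Banach spaces---your ``obvious $\dot V$-seminorm variant'' would require quotienting by constants to form a Banach couple before complex interpolation makes sense.
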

In case $X=\C$ (and thus $\theta=1$) such results can be found in \cite{DiDoUr, DoLac, Lac07, ObSeTaThWr}.
\begin{proof}
By density it suffices to prove the result for $f\in \Schw(\R;X)$. Let $(Q_n)_{n\geq 1}$ be as in the proof of Proposition \ref{9.4}.
Let $n\in \mathbb{N}$, define $J_n:=[-n,n]\cap Q_n$ and
$$
\mathcal{C}_{*,n}^{q}f(x):=  \left[a \mapsto \mathcal{C}_{a}f(x)\right]_{V^{q}(J_n;X)}, \quad  x\in\R.
$$
By the monotone convergence theorem and continuity, it suffices to prove the statement for  $\mathcal{C}_{*,n}^{q}f$.

Fix $u\in (2, q)$. We start by proving an estimate on a Hilbert space $H$.  Note that  $H$ is isomorphic to $L^2$, so the $u'$-concavification $H^{u'}$ is a $\UMD$ Banach function space since $L^{2/{u'}}$ is a $\UMD$ Banach function space. For any $p_0>u'$, by \cite[Theorem 5.2]{ALV} and Proposition \ref{9.4}, there exists a non-decreasing function $\phi_{H,p_0,u}:[1,\infty)\to [1,\infty)$ such that for all $v\in A_{{p_0}/{u'}}$ and $f\in L^{p_0}(\mathbb{R},v;H)$,
\begin{align*}
	\left\|\mathcal{C}_{*,n}^uf\right\|_{L^{p_0}(\mathbb{R},v)}
	\leq\phi_{H,p_0,u}([v]_{A_{{p_0}/{u'}}})\left\| f\right\|_{L^{p_0}(\mathbb{R},v;H)},
\end{align*}
and	
\begin{align}\label{eq:Hilbertendpoint}
	\begin{array}{lll}
		\left\|(x,a) \mapsto \mathcal{C}_{a}f(x)\right\|_{L^{p_0}(\R,v;V^u(J_n;H))}
		&\leq \left\|\left\|a \mapsto \mathcal{C}_{a}f(\cdot)\right\|_{\ell^\infty(J_n;H)}\right\|_{L^{p_0}(\R,v)}\\
		&\qquad+\left\| \left[a \mapsto \mathcal{C}_{a}f(\cdot)\right]_{V^{u}(J_n;H)}\right\|_{L^{p_0}(\mathbb{R},v)}\\
		& \leq\phi_{H,p_0,u}([v]_{A_{{p_0}/{u'}}})\left\|f\right\|_{L^{p_0}(\mathbb{R},v;H)}.
	\end{array}
\end{align}

Let  $X_0$ be a $\UMD$ Banach space and let $H$ be a Hilbert space such that $X:=[X_0,H]_{\theta}$. Set $\theta_0 := \tfrac{u}{q}$.
Then by reiteration, there exists an $\eta\in(0,1)$ small enough such that $X_1:=[X_0, H]_{\eta}$ and
$$X=[X_0,H]_{\theta}=[X_1,H]_{\theta_0}.$$
Proposition \ref{9.4} yields for all $p_1 \in (1,\infty)$, there exists a non-decreasing function $\phi_{X_1,p_1}:[1,\infty)\to [1,\infty)$  such that for $v\in A_{p_1}$, $f\in L^{p_1}(\R,v;X_1)$,
\begin{align}\label{9.91}
	\begin{array}{lll}
		\left\|(x,a) \mapsto \mathcal{C}_{a}f(x)\right\|_{L^{p_1}(\mathbb{R},v;\ell^{\infty}(J_n;X_1))}
		\leq\phi_{X_1,p_1}([v]_{A_{p_1}})\left\|f\right\|_{L^{p_1}(\R,v;X_1)}.
	\end{array}
\end{align}
Fix $w \in A_{{p/q'}}$, choose $p_0 := \tfrac{p}{q'}u'>u'$ and
$p_1 := \tfrac{p}{q'}>1.$
Then
$$
\tfrac{1-\theta_0}{p_1}+\tfrac{\theta_0}{p_0} = \tfrac{1}{p},\quad \text{and}\quad \tfrac{\theta_0}{u} =\tfrac{1}{q},
$$
so by Lemma \ref{lemma:interpolation}  and \cite[Theorem 2.2.6]{HNVW1} we have
\begin{align*}		&[L^{p_1}(\mathbb{R},w;\ell^\infty(J_n;X_1)),L^{p_0}(\mathbb{R},w;V^u(J_n;H))]_{\theta_0}
	\hookrightarrow L^{p}(\mathbb{R},w;V^{q}(J_n;X)).
\end{align*}
Hence, using complex interpolation between \eqref{eq:Hilbertendpoint} and \eqref{9.91}, we conclude that there is a non-decreasing function $\phi_{X,p,q}:[1,\infty)\to [1,\infty)$ such that for all $f\in {L^p(\R,w;X)}$,
$$	\left\|{(x,a) \mapsto \mathcal{C}_{a}f(x)}\right\|_{L^p(\R,w;V^{q}(J_n;X))}\leq \phi_{X,p,q}([w]_{A_{p/q'}}) \left\|f\right\|_{L^p(\R,w;X)}$$
finishing the proof.
\end{proof}

As an immediate consequence of Theorem \ref{thm:varcarleson}, we obtain Theorem \ref{thm:RubioX}. That is, we also can conclude that for any family of disjoint intervals $\mathcal{I}$  in $\R$ and $f \in L^p(\R,w;X)$,
\begin{align}\label{eq:RubioX2}
  \Big\|\Big(\sum_{I\in \mathcal{I}}\|S_{I}f\|^{q}_X\Big)^{1/q}\Big\|_{L^p(\R,w)}\leq \phi_{X,p,q}([w]_{A_{p/q'}}) \|f\|_{L^p(\R,w;X)}.
\end{align}

Conversely, one could ask what geometric properties of $X$ are implied by \eqref{eq:RubioX2}. In the next proposition, we show that it implies cotype $q$.
\begin{proposition}\label{prop:RubioBGT}
Let $X$ be a Banach space and $w$ be a weight. Let  $p\in[1,\infty)$ and $q\in[2,\infty)$. Suppose that
there is a constant $C>0$ such that for all  $f\in L^p(\R,w;X)$ and all families of disjoint intervals $\mathcal{I}$ in $\R$
\begin{align*}
	\Big\|\big(\sum_{I\in \mathcal{I}}\|S_{I}f\|^{q}_X\big)^{\frac{1}{q}}\Big\|_{L^p(\R,w)}\leq C \|f\|_{L^p(\R,w;X)}.
\end{align*}
Then $X$ has cotype $q$.
\end{proposition}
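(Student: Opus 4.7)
The plan is to test the hypothesized inequality against modulated Schwartz bumps whose Fourier spectra sit in a chosen family of disjoint intervals, thereby reducing the estimate to the standard definition of cotype $q$. Fix a finite sequence $(x_n)_{n=1}^N$ in $X$ and let $\varphi\in\mathcal{S}(\R)$ be a nonzero function with $\wh{\varphi}$ supported in $[-1,1]$ and $\|\varphi\|_{L^p(\R,w)}\in(0,\infty)$ (for a generic weight this can be arranged by localizing $\varphi$ appropriately). Choose separated frequencies $\xi_n:=3n$ and set $I_n:=\xi_n+[-1,1]$, so that the $I_n$ are pairwise disjoint. For a Rademacher sequence $(\varepsilon_n)$ define
\[ f_\varepsilon(x):=\varphi(x)\sum_{n=1}^N \varepsilon_n\, x_n\, e^{2\pi i\xi_n x}, \qquad x\in\R. \]
Since $\wh{\varphi}$ is supported in $[-1,1]$, the Fourier transform of $f_\varepsilon$ lies in $\bigcup_n I_n$ and $S_{I_n}f_\varepsilon=\varepsilon_n\, x_n\, e^{2\pi i\xi_n\cdot}\,\varphi$, so that
\[ \Big(\sum_{n=1}^N \|S_{I_n}f_\varepsilon(x)\|_X^q\Big)^{1/q} = \Big(\sum_{n=1}^N \|x_n\|_X^q\Big)^{1/q}\,|\varphi(x)|,\qquad x\in\R, \]
which is independent of the random signs.

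Substituting this identity into the assumed inequality and averaging over $\varepsilon$ yields
\[ \Big(\sum_n \|x_n\|_X^q\Big)^{1/q}\|\varphi\|_{L^p(\R,w)} \leq C\,\E_\varepsilon\|f_\varepsilon\|_{L^p(\R,w;X)}. \]
The Kahane--Khintchine inequalities allow one to pull the expectation inside the $L^p(\R,w)$-norm, up to a constant depending only on $p$, by passing through the $p$-th power and invoking Fubini. For each fixed $x\in\R$, the scalars $e^{2\pi i\xi_n x}$ have modulus one, so the complex contraction principle yields
\[ \E_\varepsilon\Big\|\sum_n \varepsilon_n\, e^{2\pi i\xi_n x} x_n\Big\|_X \lesssim \E_\varepsilon\Big\|\sum_n \varepsilon_n x_n\Big\|_X, \]
uniformly in $x$. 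Combining these two facts bounds the right-hand side of the previous display by a constant times $\|\varphi\|_{L^p(\R,w)}\cdot\E_\varepsilon\|\sum_n \varepsilon_n x_n\|_X$, and dividing by $\|\varphi\|_{L^p(\R,w)}>0$ leaves precisely the cotype $q$ inequality for the sequence $(x_n)$.

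The main conceptual point is the tandem use of Kahane--Khintchine and the complex contraction principle to convert the trigonometric polynomial $\sum_n \varepsilon_n e^{2\pi i\xi_n x}x_n$ pointwise into an honest Rademacher sum $\sum_n \varepsilon_n x_n$; this is where the separation of the $\xi_n$ (rather than any deeper lacunarity) is used. A subsidiary but necessary issue is producing a nonzero band-limited Schwartz function $\varphi$ with $\varphi\in L^p(\R,w)$: for weights of at most polynomial growth one may take a fixed band-limited Schwartz function, and for more pathological $w$ a short truncation or convolution argument applied to the construction above suffices to produce admissible test functions.
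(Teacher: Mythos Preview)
Your proof is correct and follows essentially the same construction as the paper: both test the inequality on $f=\varphi\cdot\sum_n \varepsilon_n e^{2\pi i\cdot 3n\cdot}\,x_n$ with a band-limited Schwartz bump $\varphi$ and disjoint intervals $I_n$ centered at $3n$. The only noteworthy difference is in how the phases $e^{2\pi i\xi_n x}$ are removed on the right-hand side: the paper takes $(\varepsilon_n)$ to be \emph{complex} Rademacher (Steinhaus) variables and uses that $(\varepsilon_n e^{2\pi i\xi_n x})_n$ has the same distribution as $(\varepsilon_n)_n$ by unimodular invariance, yielding an exact equality; you instead use real Rademachers together with the complex contraction principle, which gives the same conclusion up to an absolute constant. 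Both routes are valid and standard.
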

\begin{proof}
Take a finite sequence  $(x_n)_{n=1}^N$  in $X$. Let $(e_n)_{n\in \Z}$ be the trigonometric system on $[0,1]$ and extend it periodically to $\R$. Let $(\varepsilon_n)_{n\geq 1}$ be a sequence of complex Rademacher (or Steinhaus) random variables on $\Omega$ (see \cite{HNVW2}). Define  $f := \varphi \cdot \sum_{n=1}^N e_{3n} \varepsilon_n x_n$, where $\varphi\in \Schw(\R)$ is nonzero and  $\supp(\wh{\varphi})\subseteq [0,1]$. Let $I_{n} = [3n-1, 3n+1]$. Then $S_{I_n} f =\varphi \cdot e_{3n} \varepsilon_n x_n$  pointwise in $\Omega$ and thus
\begin{align*}
\Big\|\Big(\sum_{n=1}^N\|S_{I_n}f\|^{q}_X\Big)^{1/q}\Big\|_{L^p(\R,w)} =  \|\varphi\|_{L^p(\R,w)} \Big(\sum_{n=1}^N \|x_n\|^{q}_X\Big)^{1/q}.
\end{align*}
Therefore, applying \eqref{eq:RubioX2} and taking $p$-th moments on both sides we find that
\begin{align*}
\|\varphi\|_{L^p(\R,w)} \Big(\sum_{n=1}^N \|x_n\|^{q}_X\Big)^{1/q} &\lesssim \hab{\E\,\|f\|_{L^p(\R,w;X)}^p}^{1/p}
\\ & \lesssim \has{\E\,\Big\|\varphi \cdot \sum_{n=1}^N e_{3n} \varepsilon_n x_n\Big\|_{L^p(\R,w;X)}^p}^{1/p}
\\ &\lesssim \has{\E\,\Big\|\varphi \cdot \sum_{n=1}^N \varepsilon_n x_n\Big\|_{L^p(\R,w;X)}^p}^{1/p}
\\ & \lesssim \|\varphi\|_{L^p(\R,w)} \E\,\Big\| \sum_{n=1}^N \varepsilon_n x_n\Big\|_{X},
\end{align*}
where we used that for fixed {$t\in \R$, $(\varepsilon_n e_{3n}(t))_{n=1}^N$ } has the same distribution as $(\varepsilon_n)_{n=1}^N$ due to the unimodular invariance. Therefore, $X$ has cotype $q$.
\end{proof}

The $\LPR_p$ property introduced in \cite{BGT03} asserts that for any $p\in[2,\infty)$ and any family of disjoint intervals $\mathcal{I}$, a Rademacher sequence $(\varepsilon_I)_{I \in \mathcal{I}}$ and $f \in L^p(\R;X)$ we have
\begin{align}\label{eq:LPR}
 \E\, \Big\|\sum_{I\in \mathcal{I}}\varepsilon_I S_{I}f\Big\|_{L^p(\R;X)}\leq C_{X,p} \|f\|_{L^p(\R;X)}.
\end{align}
By \cite{HTY} we know that \eqref{eq:LPR} implies type $2$. In contrast, our vector-valued Littlewood--Paley--Rubio de Francia type estimate \eqref{eq:RubioX2} does not imply type $2$, since it holds for any $\theta$-intermediate UMD space. However, it does imply cotype $q$.
If $X$ has the $\LPR_p$ property and  cotype $q$, then the estimate \eqref{eq:RubioX2} holds for $w=1$. Indeed,  by cotype $q$, the Kahane--Khintchine inequalities and $\LPR_p$ we can estimate
\[\Big\|\big(\sum_{I\in \mathcal{I}}\|S_{I}f\|^{q}_X\big)^{\frac{1}{q}}\Big\|_{L^p(\R)} \lesssim \E\Big\|\sum_{I\in \mathcal{I}}\varepsilon_I S_{I}f\Big\|_{L^p(\R;X)}  \lesssim \|f\|_{L^p(\R;X)}.\]
Therefore, in the case that \eqref{eq:RubioX2} holds, the $\LPR_p$-property is quantitatively stronger than \eqref{eq:RubioX2}. On the other hand, beyond spaces which are isomorphic to a closed subspace of a Banach function space, there are no spaces known with the $\LPR_p$-property. In contrast \eqref{eq:RubioX2} holds for many Banach spaces, as shown in Theorem \ref{thm:varcarleson}.

For a Banach function space $X$, a different extension of Rubio de Francia's original estimate \eqref{eq:Rubioscalar} to the vector-valued setting was studied in \cite{ALV, Krol}. Indeed, in a Banach function space, one can study the estimate
\begin{align*}
  \Big\|\Big(\sum_{I\in \mathcal{I}}|S_{I}f|^{q}\Big)^{\frac{1}{q}}\Big\|_{L^p(\R,w;X)}\lesssim \|f\|_{L^p(\R,w;X)}.
\end{align*}
for any family of disjoint intervals $\mathcal{I}$  in $\R$. In a Banach function space, the result of \cite[Theorem 1.2]{ALV} is stronger than  Theorem \ref{thm:RubioX}.
Indeed, if $X  = [X_0, L^2(S)]_{\theta}$ with $X_0$ a UMD Banach function space over a measure space $(S,\mu)$, then $X$ is $q$-concave and $q'$-convex for  $q =  2/\theta$. Furthermore, the $q'$-concavification of $X$ is given by
$$
X^{q'} = [X_0,L^1(S)]_{\frac{\theta}{2-\theta}}.
$$
By self-improvement of the $\UMD$ property of $X_0$ (see \cite[Theorem 4]{Rubio86}), one can replace $L^1(S)$ by $L^{1+\varepsilon}(S)$ for some $\varepsilon>0$ and   therefore $X^{q'}$ has  $\UMD$. Hence, the assumptions of Theorem \ref{thm:RubioX} imply the assumptions of \cite[Theorem 1.2]{ALV}. Moreover, by $q$-concavity we note that
\begin{align*}
\Big\|\big(\sum_{I\in \mathcal{I}}\|S_{I}f\|^{q}_X\big)^{\frac{1}{q}}\Big\|_{L^p(\R)} \lesssim \Big\|\big(\sum_{I\in \mathcal{I}}|S_{I}f|^{q}\big)^{\frac{1}{q}}\Big\|_{L^p(\R;X)},
\end{align*}
so the conclusion of \cite[Theorem 1.2]{ALV} implies the conclusion of  Theorem \ref{thm:RubioX}. We refer to \cite[Section 5]{amenta2022} for a  comparison of Theorem \ref{thm:varcarleson} and the Banach function space case in  \cite[Theorem 5.2]{ALV}.

\section{Main Result}\label{sec:m-main}

\subsection{Fourier multiplier on \texorpdfstring{$L^p$}{Lp}-spaces}
Having shown the weighted, vector-valued variational Carleson estimate, and as a consequence the required Rubio de Francia type inequality, we now turn to our first main result on Fourier multipliers.

\begin{theorem}\label{thm:boundedsupp}
Let $\theta\in(0,1]$. Let $X$ be a  $\theta$-intermediate $\UMD$ Banach space and let $Y$ be a Banach space. Let  $s\in[1,\frac{2}{2-\theta})$, $p\in (s,\infty)$ and let $m\in V^{s}(\R;\mathcal{L}(X,Y))$. There exists a non-decreasing function $\phi_{X,Y,p,s}\colon[1,\infty)\to[1,\infty)$ such that for all $w\in A_{p/s}$,
$$\|T_m\|_{\mathcal{L}(L^p(\R,w;X),L^p(\R,w;Y))}\leq \phi_{X,p,s}([w]_{A_{p/s}})\cdot\|m\|_{V^{s}(\R;\mathcal{L}(X,Y))}.$$
\end{theorem}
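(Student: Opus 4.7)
The plan is to combine a pointwise domination of $T_m$ by a variational Carleson operator $\mathcal{C}_*^{t'}$ with the weighted estimate from Theorem~\ref{thm:varcarleson}. By density of $\mathcal{S}(\R;X)$ in $L^p(\R,w;X)$, we reduce to $f\in\mathcal{S}(\R;X)$. The delicate point is to arrange the variational exponent $t'$ in the range $t'>2/\theta$ (required by Theorem~\ref{thm:varcarleson}), while still retaining $p>t=(t')'$ and $w\in A_{p/t}$; we enable this by the self-improvement of Muckenhoupt weights.

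Concretely, I would first apply Lemma~\ref{self-improvement} to produce $\varepsilon>0$ depending on $p$ and $[w]_{A_{p/s}}$ with $w\in A_{p/s-\varepsilon}$ and comparable constant. Setting $t:=\tfrac{p}{p/s-\varepsilon}$ gives $t>s$ and $w\in A_{p/t}$; for $\varepsilon$ small enough we can moreover arrange $t<\tfrac{2}{2-\theta}$, which is possible since $s<\tfrac{2}{2-\theta}$ strictly. By Lemma~\ref{lemma:5.17}(ii) we have $V^s\hookrightarrow R^t$, yielding a decomposition $m=\sum_k\lambda_k a_k$ with $R^t$-atoms $a_k=\sum_{I\in\mathcal{I}_k}c_I^k\one_I$ over disjoint collections $\mathcal{I}_k$, and $\sum_k|\lambda_k|\lesssim_{s,t}\|m\|_{V^s(\R;\mathcal{L}(X,Y))}$. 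Since $\|a_k\|_\infty\leq 1$, the partial sums converge to $m$ in $L^\infty$, so on Schwartz $f$ the multiplier $T_mf$ arises as the pointwise limit of $T_{\sum_{k\leq N}\lambda_k a_k}f$. Next, H\"older's inequality over $\mathcal{I}_k$ with conjugate exponents $(t,t')$ yields
\begin{equation*}
\|T_{a_k}f(x)\|_Y\leq \Bigl(\sum_{I\in\mathcal{I}_k}\|c_I^k\|^t\Bigr)^{1/t}\Bigl(\sum_{I\in\mathcal{I}_k}\|S_If(x)\|_X^{t'}\Bigr)^{1/t'}\leq \mathcal{C}_*^{t'}f(x),
\end{equation*}
so summing over $k$ produces the pointwise estimate $\|T_mf(x)\|_Y\lesssim \|m\|_{V^s(\R;\mathcal{L}(X,Y))}\cdot\mathcal{C}_*^{t'}f(x)$. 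Applying Theorem~\ref{thm:varcarleson} and absorbing the $\varepsilon$-dependence into a single non-decreasing function of $[w]_{A_{p/s}}$ will then complete the proof.

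The main obstacle is that the natural choice $t=s$ sits at the critical H\"older pairing $(s,s')$, where a direct pointwise Young/Stieltjes-type bound is subtle. The self-improvement detour evades this by promoting $s$ to a marginally larger exponent $t$; this is legitimate precisely because the $R^t$-atomic decomposition of $V^s$-functions converts the variational structure into a clean linear combination of projections $S_I$ on disjoint intervals, and both ingredients---self-improvement of $A_{p/s}$ and the embedding $V^s\hookrightarrow R^t$---remain compatible thanks to $s<\tfrac{2}{2-\theta}$ strictly and $p>s$.
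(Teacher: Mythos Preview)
Your proposal is correct and follows essentially the same route as the paper's proof. Both arguments use the self-improvement Lemma~\ref{self-improvement} to pass from $s$ to a slightly larger exponent $t$ (called $s_1$ in the paper) with $t\in(s,\tfrac{2}{2-\theta}\wedge p)$ and $w\in A_{p/t}$, invoke the embedding $V^s\hookrightarrow R^t$ of Lemma~\ref{lemma:5.17}(ii), and then estimate each $R^t$-atom via H\"older's inequality with exponents $(t,t')$. The only cosmetic difference is that you obtain a pointwise bound $\|T_mf(x)\|_Y\lesssim\|m\|_{V^s}\,\mathcal{C}_*^{t'}f(x)$ and then apply Theorem~\ref{thm:varcarleson}, whereas the paper bounds each atom at the $L^p$-level using the Rubio-type consequence~\eqref{eq:RubioX2} (Theorem~\ref{thm:RubioX}) for the fixed collection $\mathcal{I}_k$; since \eqref{eq:RubioX2} is derived from Theorem~\ref{thm:varcarleson} anyway, the two are equivalent here.
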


Note that the range of $m$ as in Theorem \ref{thm:boundedsupp} is $\mathcal{R}$-bounded by Proposition \ref{R-bdd of R}. However, we will not use this fact in the proof.

\begin{proof}
We first prove the result for $m \in R^s(\R;\mathcal{L}(X,Y))$.  Write
$$m=\sum_{k=1}^{\infty}  \lambda_{k} a_{k},\quad a_{k}=\sum_{I\in \mathcal{I}_{k}}c_I \one_I,$$
where for $c_I\in \mathcal{L}(X,Y)$, $\mathcal{I}_{k}$ a family of mutually disjoint subintervals $I\subseteq \R$ such that $\sum_{I\in \mathcal{I}_k}\|c_I\|_{\mathcal{L}(X,Y)}^s\leq 1$ and
$$
\sum^{\infty}_{k=1}| \lambda_{k}| \leq \|m\|_{R^{s}(\R;\mathcal{L}(X,Y))}.
$$
Then we have for  $f \in L^p(\R,w;X)$ that
\begin{align*}
\|T_mf\|_{L^p(\mathbb{R},w;Y)}&=\Big\|\sum_{k=1}^{\infty} \lambda_{k}\sum_{I\in \mathcal{I}_{k}}c_IS_I f\Big\|_{L^p(\mathbb{R},w;Y)}
\\& \leq \sum_{k=1}^{\infty} |\lambda_{k}| \Big\|\sum_{I\in \mathcal{I}_{k}}c_IS_I f\Big\|_{L^p(\mathbb{R},w;Y)}
\\ & \lesssim \|m\|_{R^{s}(\R;\mathcal{L}(X,Y))} \cdot \sup_{k\in \N}\Big\|\sum_{I\in \mathcal{I}_{k}} c_IS_I f\Big\|_{L^p(\mathbb{R},w;Y)}
\end{align*}
For the latter we can estimate for fixed $k \in \N$ by H{\"o}lder's inequality
\begin{align*}
\Big\|\sum_{I\in \mathcal{I}_{k}}c_IS_I f\Big\|_Y & \leq \sum_{I\in \mathcal{I}_{k}}\|c_I\|_{\mathcal{L}(X,Y)}\|S_If\|_X
\leq \Big(\sum_{I\in \mathcal{I}_{k}}\|S_If\|_X^{s'} \Big)^{1/s'}.
\end{align*}
Therefore, applying Theorem \ref{thm:RubioX} we can conclude that
\begin{align*}
\|T_mf\|_{L^p(\mathbb{R},w;Y)} &\lesssim \|m\|_{R^{s}(\R;\mathcal{L}(X,Y))} \cdot\sup_{k\in\N} \Big\|\Big(\sum_{I\in \mathcal{I}_k}\|S_If\|_X^{s'} \Big)^{1/s'}\Big\|_{L^p(\mathbb{R},w)} \\ & \leq \|m\|_{R^{s}(\R;\mathcal{L}(X,Y))} \cdot \phi_{X,p,s}([w]_{A_{{p/s}} }) \|f\|_{L^p(\mathbb{R},w;X)},
\end{align*}
concluding the proof for $R^s$.

To obtain the result for $V^s$, note that by Lemma \ref{self-improvement} we can choose $s_1\in (s,\frac{2}{2-\theta}\wedge p)$ depending on $p,s$ and $[w]_{ A_{{p/s}}}$ such that  $w\in A_{{p/s_1}}$ and $[w]_{A_{{p/s_1}}}\lesssim_{p,s}[w]_{A_{{p/s}}}.$ Since $V^s\hookrightarrow R^{s_1}$ by Lemma \ref{lemma:5.17}, the result follows from the previous case applied with $s_1$ instead of $s$.
\end{proof}

Combining Theorem \ref{thm:boundedsupp} with the Littlewood--Paley theorem we obtain the following result for multipliers with some decay at zero and infinity (measured using an $\ell^r$-norm over $\Delta$). This is a quantified version of Theorem \ref{thm:mainintro}. Note that the second statement in Theorem \ref{thm:mainintro} follows by duality.
\begin{theorem}\label{thm:mainthm}
Let $\theta\in (0,1]$,  $t\in (1, 2]$, $q\in [2, \infty)$ and set $\frac1r := \frac1t-\frac1q$. Let $X$ be a {$\theta$-intermediate} $\UMD$ Banach space with cotype $q$ and $Y$ be a $\UMD$ Banach space with type $t$.
Let $s\in[1,\frac{2}{2-\theta})$ and suppose that  $m\in \ell^{r}(\dot{V}^{s}(\Delta;\mathcal{L}(X,Y)))$ has $\mathcal{R}$-bounded range.
Then for all $p\in (s,\infty)$,  there exists a non-decreasing function $\phi_{X,Y,p,q,s,t}\colon[1,\infty)\to[1,\infty)$ such that for all $w\in A_{p/s}$,
\begin{align*}
\|T_m\|_{\mathcal{L}(L^p(\R,w;X),L^p(\R,w;Y))} & \leq \phi_{X,Y,p,q,s,t}([w]_{A_{p/s}})\big(\|m\|_{ \ell^{r}(\dot{V}^{s}(\Delta;\mathcal{L}(X,Y)))} + \mathcal{R}(\Ran(m))\big).
\end{align*}
\end{theorem}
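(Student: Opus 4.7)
The plan is to decompose $m = m_1 + m_2$, with
\[m_1 := \sum_{J \in \Delta} c_J \one_J,\qquad c_J := m(\xi_J) \text{ for some fixed } \xi_J \in J,\]
so that $\{c_J\}_J \subseteq \Ran(m)$ is $\mathcal{R}$-bounded and, crucially, $\|(m-c_J)|_J\|_{L^\infty(J;\mathcal{L}(X,Y))} \leq [m|_J]_{V^s(J;\mathcal{L}(X,Y))}$; set $m_2 := m - m_1 = \sum_J (m - c_J)\one_J$. For $T_{m_1}$ I would use the identity $S_J T_{m_1} = c_J S_J$ together with the weighted vector-valued Littlewood--Paley equivalence of Proposition~\ref{prop:LPw} (applied in both $Y$ and $X$) and the $\mathcal{R}$-boundedness of $\{c_J\}$:
\[
\|T_{m_1} f\|_{L^p(\R,w;Y)} \lesssim \E\Big\|\sum_J \varepsilon_J c_J S_J f\Big\|_{L^p(\R,w;Y)} \leq \mathcal{R}(\Ran(m))\, \E\Big\|\sum_J \varepsilon_J S_J f\Big\|_{L^p(\R,w;X)} \lesssim \mathcal{R}(\Ran(m))\|f\|_{L^p(\R,w;X)}.
\]

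For $T_{m_2}$ I would extend each $(m-c_J)|_J$ by zero outside $J$ to a function $\tilde m_J \in V^s(\R;\mathcal{L}(X,Y))$ with $\|\tilde m_J\|_{V^s(\R)} \lesssim [m|_J]_{V^s(J)}$ (the $L^\infty$-part is controlled by the choice of $c_J$ and the two jumps at $\partial J$ each cost at most $\|\tilde m_J\|_{L^\infty}$). Since $T_{(m-c_J)\one_J} = T_{\tilde m_J}\circ S_J$, Theorem~\ref{thm:boundedsupp} applied to $\tilde m_J$ on $\R$ produces the per-piece estimate
\[\|T_{(m-c_J)\one_J} f\|_{L^p(\R,w;Y)} \lesssim [m|_J]_{V^s(J)}\, \|S_J f\|_{L^p(\R,w;X)}.\]
To glue the pieces I would first apply Proposition~\ref{prop:LPw} for $Y$, using the disjoint Fourier supports of the $T_{(m-c_J)\one_J} f$, and then pointwise Kahane--Khintchine with Fubini together with the type $t$ of $Y$ to obtain
\[
\|T_{m_2} f\|_{L^p(\R,w;Y)} \lesssim \Big\|\Big(\sum_J \|T_{(m-c_J)\one_J} f\|_Y^t\Big)^{1/t}\Big\|_{L^p(\R,w)}.
\]
In the comfortable regime $p\geq t$, Minkowski's integral inequality lets me pull out the sum; then the per-piece bound, H\"older's inequality with $\tfrac1t = \tfrac1r + \tfrac1q$, and cotype $q$ of $L^p(\R,w;X)$ (which holds for $p\leq q$, via Proposition~\ref{prop:LPw} again) close the estimate with the factor $\|m\|_{\ell^{r}(\dot V^{s}(\Delta;\mathcal{L}(X,Y)))}$.

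The hard part will be making this type/cotype step work uniformly for \emph{all} $p\in(s,\infty)$ rather than only in the comfortable window $\max(t,q')\leq p\leq q$: outside this window $L^p(\R,w;Y)$ and $L^p(\R,w;X)$ no longer inherit the correct type and cotype from $Y$ and $X$ (the type of $L^p(\R,w;Y)$ is only $\min(p,t)$ and the cotype of $L^p(\R,w;X)$ is only $\max(p,q)$), so neither the Minkowski step nor the direct cotype step is available. To cover the remaining $p$ I would replace the cotype step by the weighted vector-valued Rubio--de Francia inequality of Theorem~\ref{thm:RubioX} and, to handle the weight, invoke a Rubio de Francia $A_p$-extrapolation argument transferring the estimate from one favourable exponent $p_0$ to the whole range $p>s$ with $w\in A_{p/s}$.
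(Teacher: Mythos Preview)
Your proposal is correct and follows the same overall strategy as the paper: split $m$ into a piecewise-constant part handled by Littlewood--Paley plus $\mathcal{R}$-boundedness, and an oscillating part handled by Theorem~\ref{thm:boundedsupp} together with type/cotype and H\"older. The paper's execution differs in one streamlining move that removes your ``hard part'' entirely: it invokes rescaled Rubio de Francia extrapolation \emph{at the outset} to reduce to the single exponent $p=2$. Since $t\le 2\le q$, the space $L^2(\R,w;Y)$ inherits type $t$ and $L^2(\R,w;X)$ inherits cotype $q$, so one can apply type and cotype directly to the $L^2(\R,w;\cdot)$-valued Rademacher sums arising from Proposition~\ref{prop:LPw}; your pointwise Kahane--Khintchine plus Minkowski step becomes unnecessary, and Theorem~\ref{thm:RubioX} is never invoked in this proof (it is used only inside Theorem~\ref{thm:boundedsupp}). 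Your choice $c_J=m(\xi_J)$ is a mild simplification over the paper's $c_J=|J|^{-1}\int_J m$, since it places $\{c_J\}$ directly in $\Ran(m)$ and avoids the closed-convex-hull step the paper needs for $\mathcal{R}$-boundedness.
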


Note that the assumptions in Theorem \ref{thm:mainthm} imply that $r>s$. Indeed, by complex  interpolation we know that $X$ has cotype $q=\frac{2}{\theta}$ (see \cite[Proposition 7.1.3]{HNVW2}) and therefore $\frac{1}{r}\leq1-\frac{\theta}{2}<\frac{1}{s}$.

\begin{proof}
By rescaled Rubio de Francia extrapolation (see \cite[Corollary 3.14]{Cruz}) it suffices to consider $p=2$. We will first prove the result for $V^s$ instead of $\dot{V}^s$.

Take $m\in \ell^r(V^s(\Delta;\mathcal{L}(X,Y)))$, for $J \in \Delta$ define $k_J:= \|\one_J m\|_{V^{s}(J;\mathcal{L}(X,Y))}$ and set
$$C_m:= \|(k_J)_{J}\|_{\ell^r} =\|m\|_{ \ell^{r}(V^{s}(\Delta;\mathcal{L}(X,Y)))}.$$
Take $f\in L^2(\R,w;X)$  such that $\widehat{f}$ has compact support in $\R \setminus \cbrace{0}$, so that below the sum over $\Delta$ is finite.

By Proposition \ref{prop:LPw} and type $t$ of $L^2(\R,w;Y)$ we can write
\begin{align*}
\|T_m f\|_{L^2(\R,w;Y)}& \leq \phi_{Y}([w]_{A_2}) \mathbb{E}\,\Big\|\sum_{J\in \Delta} \varepsilon_J T_{\one_J m} S_J f \Big\|_{L^2(\mathbb{R},w;Y)}
\\ & \leq \phi_{Y,t}([w]_{A_2}) \Big(\sum_{J\in \Delta} \|T_{\one_Jm} S_J f \|_{L^2(\mathbb{R},w;Y)}^t\Big)^{1/t}
\end{align*}
Using Theorem \ref{thm:boundedsupp} we deduce
\[\|T_{\one_J m} S_J f \|_{L^2(\mathbb{R},w;Y)}\leq \phi_{X,Y,s}([w]_{A_{2/s}}) \cdot  k_J\cdot  \|S_J f\|_{L^2(\R,w;X)}.\]
Therefore, using H\"older's inequality, $A_{2/s} \subseteq A_{2}$ and cotype $q$ of $L^2(\R,w;X)$, we obtain
\begin{align*}
\|T_m f\|_{L^2(\R,w;Y)}&\leq \phi_{X,Y,s,t}([w]_{A_{2/s}}) \Big(\sum_{J\in \Delta} k_J^t\cdot  \|S_J f\|_{L^2(\R,w;X)}^t\Big)^{1/t}
\\ & \leq \phi_{X,Y,s,t}([w]_{A_{2/s}}) \cdot \|(k_J)_{J}\|_{\ell^r} \cdot \Big(\sum_{J\in \Delta}\|S_J f\|_{L^2(\R,w;X)}^q\Big)^{1/q}
\\ & \leq \phi_{X,Y,q,s,t}([w]_{A_{2/s}})\cdot  C_m \cdot \E\, \Big\|\sum_{J\in \Delta} \varepsilon_J S_J f\Big\|_{L^2(\R,w;X)}
\\ & \leq \phi_{X,Y,q,s,t}([w]_{A_{2/s}})\cdot C_m \cdot \|f\|_{L^2(\R,w;X)}.
\end{align*}
By density (see \cite[Lemma 3.3]{FHL}), this finishes the proof for $V^s$.

Now take $m\in \ell^r(\dot{V}^s(\Delta;\mathcal{L}(X,Y)))$ and define
\[\wt{m} = \sum_{J\in \Delta} \one_{J} (m - m_J),\]
where $m_J = |J|^{-1}\int_{J} m(y) \dd y$ for $J \in \Delta$. Then $[\wt{m}]_{V^s(J;\calL(X,Y))} = [m]_{V^s(J;\calL(X,Y))}$ for all $J \in \Delta$ and
\begin{align*}
\|\wt{m}\|_{L^\infty(J;\calL(X,Y))} &\leq \frac{1}{|J|}\int_J \|x\mapsto m(x) - m(y)\|_{L^\infty(J;\calL(X,Y))} \dd y
\\ &\leq [m]_{V^s(J;\calL(X,Y))}.
\end{align*}
Therefore, $\wt{m}\in\ell^r(V^s(\Delta;\mathcal{L}(X,Y)))$
	and 
\[\|T_{\wt{m}} f\|_{L^2(\R,w;Y)}\leq \phi_{X,Y,q,s,t}([w]_{A_{2/s}}) \cdot \|m\|_{ \ell^{r}(\dot{V}^{s}(\Delta;\mathcal{L}(X,Y)))}\cdot \|f\|_{L^2(\R,w;X)}.\]
To complete the proof it suffices to show boundedness of  $T_{\bar{m}}$ with $\bar{m} := \sum_{J\in \Delta} \one_{J} m_J$. Since $\Ran(m)$ is $\mathcal{R}$-bounded, by \eqref{5.27} we know that
$\overline{\conv}(\Ran(m))$ is also $\mathcal{R}$-bounded. Since $m_J\in \overline{\conv}(\Ran(m))$ (\cite[Proposition 1.2.12]{HNVW1}), 
by Proposition \ref{prop:LPw}  we have that
\begin{align*}
	\|T_{\bar{m}}f\|_{L^2(\R,w;Y)}&\leq  \phi_{Y}([w]_{A_{2}})\cdot \mathbb{E}\,\nrms{\sum_{J\in\Delta}\varepsilon_J m_{J} S_Jf}_{L^2(\R,w;Y)}\\  &\le \phi_{Y}([w]_{A_{2}})\cdot \mathcal{R}(\Ran(m)) \cdot \mathbb{E}\,\nrms{\sum_{J\in\Delta}\varepsilon_J S_Jf}_{L^2(\R,w;X)}\\
	&\leq \phi_{X,Y}([w]_{A_{2}})\cdot \mathcal{R}(\Ran(m))\cdot\|f\|_{L^2(\R,w;X)},
\end{align*}
finishing the proof.
\end{proof}

\subsection{Fourier multipliers on Besov and Triebel--Lizorkin spaces}
If $L^p$ is replaced by a Besov space or Triebel--Lizorkin space, our multiplier theorem simplifies. Indeed, type, cotype and $\mathcal{R}$-boundedness do not play any role anymore.
Moreover, no decay of $m$ is required. We state the result for the inhomogeneous Besov space $B^{\alpha}_{p,q}$. The reader is referred to the text below the theorem for the necessary changes in the homogeneous setting. We define
\begin{align*}
J_0&:= (-1,1),\\
  J_n&:= \cbrace{ \xi \in \R:2^{n-1}\leq |\xi|<2^{n}}, \quad n \in \N.
\end{align*}

\begin{theorem}[Fourier multipliers on Besov spaces]\label{thm:Besov}
Fix $\theta\in (0,1]$. Let $X$ be a $\theta$-intermediate $\UMD$ Banach space and $Y$ be a $\UMD$ Banach space.   Let $s\in[1,\frac{2}{2-\theta})$ and assume that $m \colon \R \to \mathcal{L}(X,Y)$ satisfies
\[C_m := \sup_{n \geq 0}\|m\|_{V^{s}(J_n;\mathcal{L}(X,Y))}<\infty.\]
Let $p\in (s, \infty)$ and $q\in [1, \infty]$ and $\alpha\in \R$.
Then there exists a non-decreasing function $\phi_{X,Y,p,q,s}\colon[1,\infty)\to[1,\infty)$ such that for all $w\in A_{p/s}$,
\begin{align*}
\|T_m\|_{\mathcal{L}(B^{\alpha}_{p,q}(\R,w;X),B^{\alpha}_{p,q}(\R,w;Y))}&\leq \phi_{X,Y,p,q,s}([w]_{A_{p/s}}) \cdot C_m.
\end{align*}
\end{theorem}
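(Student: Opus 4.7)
The strategy is to exploit the frequency localisation of $(\varphi_k)_{k\geq 0}$ to reduce everything to a blockwise application of Theorem \ref{thm:boundedsupp}. First, I would fix an auxiliary family $(\psi_k)_{k\geq 0}\subseteq \Schw(\R)$ with $\widehat{\psi}_k\equiv 1$ on $\supp \widehat{\varphi}_k$ and with $\supp \widehat{\psi}_k$ contained in a slightly enlarged annulus that, crucially, meets only a uniformly bounded (in $k$) number of the intervals $\pm J_n$. Setting $m_k:=\widehat{\psi}_k\cdot m$, the identity
\[
\widehat{\varphi_k*T_m f} \;=\; \widehat{\varphi}_k\cdot m\cdot \widehat{f} \;=\; \widehat{\varphi}_k\cdot (\widehat{\psi}_k m)\cdot\widehat{f}
\]
yields the key reduction $\varphi_k*T_m f = T_{m_k}(\varphi_k*f)$. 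It therefore suffices to prove the uniform-in-$k$ operator bound $\|T_{m_k}\|_{\mathcal{L}(L^p(\R,w;X),L^p(\R,w;Y))}\lesssim \phi_{X,p,s}([w]_{A_{p/s}})\,C_m$, because then taking the $\ell^q$-norm of $2^{k\alpha}\|\varphi_k*T_m f\|_{L^p(\R,w;Y)}$ produces
\[
\|T_m f\|_{B^{\alpha}_{p,q}(\R,w;Y)} \;\lesssim\; \phi_{X,p,s}([w]_{A_{p/s}})\cdot C_m\cdot \|f\|_{B^{\alpha}_{p,q}(\R,w;X)}.
\]

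Given Theorem \ref{thm:boundedsupp}, the uniform operator bound follows once I prove $\sup_{k\geq 0}\|m_k\|_{V^s(\R;\mathcal{L}(X,Y))}\lesssim C_m$. For this I would use two elementary ingredients. The first is a product estimate
\[
\|hg\|_{V^s(J;\mathcal{L}(X,Y))}\;\leq\; \|g\|_{L^\infty(J;\mathcal{L}(X,Y))}\|h\|_{V^s(J)} + \|h\|_{L^\infty(J)}\|g\|_{V^s(J;\mathcal{L}(X,Y))},
\]
which follows by writing each increment $h(t_{i+1})g(t_{i+1})-h(t_i)g(t_i)$ as a telescoping sum and applying Minkowski on $\ell^s$. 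The second is the concatenation estimate that whenever an interval $I$ is a union of $N$ consecutive intervals from $\{\pm J_n\}_{n\geq 0}$, one has $\|m\|_{V^s(I;\mathcal{L}(X,Y))}\lesssim_N C_m$: indeed, any partition of $I$ splits into partitions of the constituent blocks together with at most $N-1$ crossing increments, each bounded by $2\|m\|_{L^\infty}\lesssim C_m$ in the $\ell^s$-sum. Since $(\widehat{\psi}_k)_{k\geq 0}$ consists of uniform dilates of a fixed smooth bump, $\sup_k\|\widehat{\psi}_k\|_{V^s(\R)}<\infty$, and since $\supp m_k$ meets only boundedly many $J_n$, the two ingredients combined deliver the desired uniform $V^s$-bound on $m_k$.

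The main (though mild) technical point is the concatenation step: the bookkeeping of boundary crossings between neighbouring dyadic blocks, which is precisely where the $L^\infty$-component built into the $V^s$-norm on each $J_n$ (and hence into the hypothesis $C_m<\infty$) plays its role; without it, the constants would blow up in $k$. A standard density argument extends the bound from Schwartz functions to all of $B^{\alpha}_{p,q}(\R,w;X)$ in the cases where density holds, and in the remaining cases one closes the argument via the usual duality/Fatou trick for Besov norms.
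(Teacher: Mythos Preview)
Your proposal is correct and follows essentially the same route as the paper: reduce to a uniform-in-$k$ bound $\|T_{m_k}\|_{\mathcal{L}(L^p(\R,w;X),L^p(\R,w;Y))}\lesssim \phi([w]_{A_{p/s}})\,C_m$ via Theorem~\ref{thm:boundedsupp}, and verify this by the $V^s$ product estimate together with the observation that the localised symbol lives on a bounded union of the $J_n$'s. The only cosmetic difference is that the paper takes $m_n:=\widehat{\varphi}_n\,m$ and uses the almost-orthogonality identity $\varphi_n*T_m f=\sum_{j=-1}^{1} T_{m_{n+j}}(\varphi_n*f)$ in place of your auxiliary bump $\psi_k$; your final density/Fatou remark is in fact unnecessary, since the identity $\varphi_k*T_m f=T_{m_k}(\varphi_k*f)$ already holds distributionally and $\varphi_k*f\in L^p(\R,w;X)$ for every $f\in B^{\alpha}_{p,q}(\R,w;X)$.
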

\begin{proof}
 Let $f\in B^{\alpha}_{p,q}(\R,w;X)$. Write $f_n = \varphi_n*f$ and $m_n = \wh{\varphi}_n m$, where $(\varphi_n)_{n\geq 0}$ is an inhomogeneous Littlewood--Paley sequence as in \cite[Chapter 14]{HNVW3}. Then by the properties of the $(\varphi_n)_{n\geq 0}$ we can write 
\begin{align*}
\|T_m f\|_{B^{\alpha}_{p,q}(\R,w;Y))} \leq  \sum_{j=-1}^1 \Big\|\Big(2^{\alpha k} \|T_{m_{n+j}} f_n\|_{L^p(\R,w;Y)}\Big)_{n\geq0}\Big\|_{\ell^q}, 
\end{align*}
where we set $m_{-1}=0$. 
Therefore, it suffices to show that for any $n \geq 0$,
\begin{align*}
  \|T_{m_n}g\|_{L^p(\R,w;Y)}\leq\phi_{X,Y,p,s}([w]_{A_{p/s}}) \cdot C_m \|g\|_{L^p(\R,w;X)}, \ \ \ g\in L^p(\R,w;X).
\end{align*}
By Theorem \ref{thm:boundedsupp} and the support conditions of $\wh{\varphi}_n$, it suffices to estimate $\|m_n\|_{V^{s}(\wt{J}_{n};\calL(X,Y))}$, where $\wt{J}_{n} :=  J_{n} \cup J_{n+1}$, and we note that $\supp(\wh{\varphi}_n)\subseteq \wt{J}_n$.
Since $|\wh{\varphi}_n|\leq 1$, it is clear that $$\|m_n\|_{L^\infty(\wt{J}_n;\mathcal{L}(X,Y))}\leq \|m\|_{L^\infty(\R;\mathcal{L}(X,Y))} \leq C_m.$$
Furthermore, we have
\begin{align*}
[m_n]_{V^{s}(\wt{J}_{n};\calL(X,Y))}\leq \|\wh{\varphi}_n\|_{L^\infty(\wt{J}_n)} \|m\|_{V^{s}(\wt{J}_{n};\calL(X,Y))} + \|\wh{\varphi}_n\|_{V^s(\wt{J}_n)} \|m\|_{L^\infty(\wt{J}_n;\calL(X,Y))} \lesssim C_m,
\end{align*}
finishing the proof.
\end{proof}

A similar result holds for the homogeneous Besov space if one uses $\Delta$ instead of $(J_n)_{n\geq 0}$. A result related to Theorem \ref{thm:Besov} under a Fourier type $s$ condition and using $m\in B^{1/s}_{s,1}(J;\calL(X,Y))$ was obtained in \cite{GW03Besov}. Note that the space $V^{s}(J;\calL(X,Y))$ is larger, as shown in Lemma \ref{lem:BesovV}. It, in particular, contains non-continuous multipliers.

Next, we move to the case of Triebel--Lizorkin spaces. Multiplier theorems in that setting can be found in \cite{Tri83} for the scalar case, and in \cite{BuKim} for the vector-valued setting. We obtain our result from the Besov space case using Rubio de Francia extrapolation.

\begin{theorem}[Fourier multipliers on Triebel--Lizorkin spaces]\label{thm:TL}
Let $\theta\in (0,1]$. Let $X$ be a $\theta$-intermediate $\UMD$ Banach space and $Y$ be a $\UMD$ Banach space.  Let $s\in[1,\frac{2}{2-\theta})$ and assume that $m \colon \R \to \mathcal{L}(X,Y)$ satisfies
\[C_m := \sup_{n \geq 0}\|m\|_{V^{s}(J_n;\mathcal{L}(X,Y))}<\infty.\]
Let $p,q\in (s,\infty)$ and $\alpha\in \R$. Then there exists a non-decreasing function $\phi_{X,Y,p,q,s}\colon[1,\infty)\to[1,\infty)$ such that for all $w\in A_{p/s}$,
\begin{align*}
&\|T_m\|_{\mathcal{L}(F^{\alpha}_{p,q}(\R,w;X),F^{\alpha}_{p,q}(\R,w;Y))} \leq\phi_{X,Y,p,q,s}([w]_{A_{p/s}})\cdot C_m.
\end{align*}
\end{theorem}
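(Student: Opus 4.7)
The plan is to deduce Theorem~\ref{thm:TL} from Theorem~\ref{thm:boundedsupp} by means of vector-valued Rubio de Francia extrapolation, in a manner parallel to Theorem~\ref{thm:Besov}. The first step is to localize in frequency: setting $m_k := \wh{\varphi}_k m$ for the inhomogeneous Littlewood--Paley system $(\varphi_k)_{k\geq 0}$, and using that $\wh{\varphi}_k$ is supported in $\wt{J}_k := J_k \cup J_{k+1}$ and overlaps only with $\wh{\varphi}_{k+j}$ for $|j|\leq 1$, I would write
\[
\varphi_k * T_m f \;=\; T_{m_k}\Bigl(\sum_{|j|\leq 1} \varphi_{k+j}*f\Bigr),
\]
with the usual convention $\varphi_{-1}=0$. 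As already observed in the proof of Theorem~\ref{thm:Besov}, the bounds $\|m_k\|_{L^\infty(\wt{J}_k;\mathcal{L}(X,Y))} \leq C_m$ and $[m_k]_{V^s(\wt{J}_k;\mathcal{L}(X,Y))} \lesssim C_m$ hold uniformly in $k$, so Theorem~\ref{thm:boundedsupp} supplies the uniform weighted $L^p$-bound
\[
\|T_{m_k} g\|_{L^p(\R,w;Y)} \leq \phi_{X,Y,p,s}([w]_{A_{p/s}})\cdot C_m \cdot \|g\|_{L^p(\R,w;X)}
\]
for all $p\in (s,\infty)$ and $w\in A_{p/s}$, with constant independent of $k$.

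The second step is to apply the (rescaled) vector-valued Rubio de Francia extrapolation theorem \cite[Corollary 3.14]{Cruz} to the family of pairs $\bigl(\|g\|_X,\, \|T_{m_k}g\|_Y\bigr)$ indexed by $k\geq 0$ and $g\in L^p(\R,w;X)$. Since this family satisfies the weighted $L^p$-inequality above for every $p\in (s,\infty)$ and $w\in A_{p/s}$, extrapolation yields the $L^p(\ell^q)$-extension: for all $p,q\in (s,\infty)$, $w\in A_{p/s}$ and any sequence $(g_k)_{k\geq 0}$,
\[
\Bigl\|\bigl(\sum_{k\geq 0} \|T_{m_k}g_k\|_Y^q\bigr)^{1/q}\Bigr\|_{L^p(\R,w)} \leq \phi_{X,Y,p,q,s}([w]_{A_{p/s}})\cdot C_m \cdot \Bigl\|\bigl(\sum_{k\geq 0} \|g_k\|_X^q\bigr)^{1/q}\Bigr\|_{L^p(\R,w)}.
\]

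Finally, I would apply this inequality with $g_k = 2^{\alpha k} \varphi_{k+j}*f$ for each $|j|\leq 1$, sum the three resulting terms, and combine with the localization identity from the first step to conclude
\[
\|T_m f\|_{F^\alpha_{p,q}(\R,w;Y)} \leq \phi_{X,Y,p,q,s}([w]_{A_{p/s}})\cdot C_m \cdot \|f\|_{F^\alpha_{p,q}(\R,w;X)}.
\]
Density of $\Schw(\R;X)$ (or an appropriate subspace) in $F^\alpha_{p,q}(\R,w;X)$ then gives the full statement. The main technical point to verify is that the form of extrapolation in \cite[Corollary 3.14]{Cruz} admits the Banach-space-valued formulation used here, i.e.\ applied to pairs of the pointwise scalar norms $\|g\|_X$ and $\|T_{m_k}g\|_Y$ indexed by a family of operators; this is the step where the type/cotype and $\mathcal{R}$-boundedness hypotheses present in Theorem~\ref{thm:mainthm} become unnecessary, since extrapolation transfers the scalar weighted bound directly to the $\ell^q$-valued setting without appealing to geometric properties of $X$ or $Y$.
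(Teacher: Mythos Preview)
Your proposal is correct and follows essentially the same route as the paper: both arguments combine the localized weighted $L^p$-bounds for $T_{m_k}$ (obtained via Theorem~\ref{thm:boundedsupp} and the $V^s$-estimate for $\wh{\varphi}_k m$ from the proof of Theorem~\ref{thm:Besov}) with rescaled Rubio de Francia extrapolation \cite[Corollary~3.14]{Cruz}. The only organizational difference is that the paper phrases the extrapolation step as a reduction to the diagonal case $p=q$ (where $F^{\alpha}_{p,p}=B^{\alpha}_{p,p}$ and Theorem~\ref{thm:Besov} applies verbatim), whereas you apply the $\ell^q$-valued form of extrapolation directly to the family of pairs $(\|g\|_X,\|T_{m_k}g\|_Y)$; these are two equivalent ways of invoking the same extrapolation principle.
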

\begin{proof}
In the notation of the proof of Theorem \ref{thm:Besov}, we have to show that
\begin{align*}
  \|(\varphi_n *T_{m} f)_{n\geq 0}\|_{L^p(\R,w;\ell^q(Y))}\leq \phi_{X,Y,p,q,s}([w]_{A_{p/s}})\cdot C_m \|(\varphi_n * f)_{n\geq 0}\|_{L^p(\R,w;\ell^q(X))}.
\end{align*}
Since $q\in (s, \infty)$ by rescaled Rubio de Francia extrapolation (see \cite[Corollary 3.14]{Cruz}), it suffices to consider $p=q$. This case follows directly from Theorem \ref{thm:Besov}.
\end{proof}

\section*{Open problems}

\begin{problem}
Can one characterize the vector-valued Rubio de Francia type estimate in \eqref{eq:RubioX} or its converse in terms of other geometric conditions on $X$?
\end{problem}
Note that \eqref{eq:RubioX} implies that $S_I$ is bounded on $L^p(\R;X)$ for some interval $I$. By a scaling argument one sees that this implies the boundedness of the Riesz projection, and thus the Hilbert transform, and thus the $\UMD$ property. In Proposition \ref{prop:RubioBGT} we have also seen that cotype $q$ is necessary for \eqref{eq:RubioX}. Moreover, we expect that Fourier type properties can be derived in a similar way as in \cite[Section 2.4]{DengLorVer}.

It is interesting to know whether the decay condition in Theorem \ref{thm:mainthm} can be removed. A technique to do so is used in \cite{GW03, Hyt04}.
\begin{problem}
Can the decay condition be removed in Theorem \ref{thm:mainthm}? In other words, is it  enough to assume that the range of $m$ is $\mathcal{R}$-bounded and $\sup_{J\in \Delta }\|m\|_{\dot{V}^{s}(J;\mathcal{L}(X,Y))}<\infty$ to obtain the boundedness of $T_m:L^p(\R;X)\to L^p(\R,Y)$?
\end{problem}

In the scalar case, some higher dimensional analogues of \cite{coifman} have been obtained by \cite{Krol, Lac07, Xu}. In the vector-valued setting, higher dimensional analogues of Theorem \ref{5.8} can be found in \cite{HHN, SW07} and \cite[Theorem 8.3.19]{HNVW2}.
\begin{problem}
What are the analogues in $\R^d$ of the multiplier results of Theorems \ref{thm:boundedsupp}, \ref{thm:mainthm}, \ref{thm:Besov} and \ref{thm:TL}?
\end{problem}

\bigskip

{\em Use of LLMs:}
The authors used ChatGPT and Gemini for proof checking and the generation of some of the ideas.

\end{document}